\documentclass[oneside,english]{amsart}
\usepackage[T1]{fontenc}
\usepackage[latin9]{inputenc}
\setlength{\parskip}{\medskipamount}
\setlength{\parindent}{0pt}
\usepackage{amsthm}
\usepackage{amstext}
\usepackage{amssymb}
\usepackage{wasysym}
\usepackage{esint}

\makeatletter
\numberwithin{equation}{section}
\numberwithin{figure}{section}

\theoremstyle{plain}
  \newtheorem{thm}{\protect\theoremname}[section]
  \newtheorem*{thm*}{\protect\theoremname}
  \newtheorem{lem}[thm]{\protect\lemmaname}
  
\theoremstyle{definition}
  \newtheorem{prop}[thm]{\protect\propname}
\theoremstyle{remark}
  \newtheorem*{rem}{\protect\remarkname}
 \newtheorem*{acknowledgements}{Acknowledgements}
\makeatother
\usepackage{babel}
  \providecommand{\lemmaname}{Lemma}
  \providecommand{\theoremname}{Theorem}
  \providecommand{\propname}{Proposition}
  \providecommand{\remarkname}{Remark}
  \providecommand{\corname}{Corollary}

\begin{document}

\title[Twisted Quadratic Moment]{On the twisted quadratic moment for Dirichlet L-functions}

\author{Seok Hyeong Lee} \address{National Institute for Mathematical Sciences, Daejeon 305-811, South Korea}\email{lshyeong@nims.re.kr}
\author{Seungjai Lee} \address{National Institute for Mathematical Sciences, Daejeon 305-811, South Korea}\email{sjlee@nims.re.kr}

\subjclass[2010]{Primary 11M20}	
\keywords{$L$-function, character, mean values, moments}
\thanks{This work was supported by the National Institute for Mathematical Sciences (NIMS) grant funded by the Korean government (C21602)}
\date{\today}	 
\begin{abstract}
Given $c,$ a positive integer, we give an explicit formula and an
asymptotic formula for 
\[
\sum\chi(c)\left|L(1,\,\chi)\right|^{2},
\]
where $\chi$ is the non-trivial Dirichlet character mod $f$ with $f>c.$
\end{abstract}
\maketitle
\section{Introduction and statement of Results}

Let $c>$1 be a given
positive integer. Let $f>c$ be an integer with gcd$(f,\,c)=1.$  For all non-trivial Dirichlet character $\chi$ mod $f$,  we consider the following sum
\[
M\left(f,\,c\right) :=\sum\chi(c)\left|L(1,\,\chi)\right|^{2}.\]
This can be thought as a twisted version of quadratic moment $\sum\left|L(1,\,\chi)\right|^{2}$, whose asymptotic
\[
\sum\left|L(1,\,\chi)\right|^{2}=\frac{\pi^{2}}{6}\phi(f)\prod_{p\vert f}\left(1-\frac{1}{p^{2}}\right)-\frac{\phi(f)^{2}}{f^{2}}\left(\log f+\sum_{p\vert f}\frac{\log p}{p-1}\right)^{2}+o(\log\log f).
\] was studied by Zhang (\cite{key-9}, \cite{key-10}).

In this paper, we aim to provide the analogous formula for the twisted quadratic moment $M(f,\,c)$.
\begin{thm} We have
	\label{thm:asymp overall}	
	\[
	M(f,c)=\frac{\pi^{2}}{6c}\phi(f)\prod_{p\vert f}\left(1-\frac{1}{p^{2}}\right)-\frac{\phi(f)^{2}}{f^{2}}\left(\log f+\sum_{p\vert f}\frac{\log p}{p-1}\right)^{2}+O(\frac{c^3}{f} + \log f).
	\] 
\end{thm}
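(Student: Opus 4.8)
The plan is to turn the character average into an arithmetic sum by orthogonality, after first replacing each $L(1,\chi)$ by a finite expression so that no convergence issues arise. For a non-principal character $\chi\bmod f$ one has the Hurwitz/digamma representation
\[
L(1,\chi)=-\frac1f\sum_{a=1}^{f}\chi(a)\,\psi\!\left(\tfrac{a}{f}\right),
\]
where $\psi$ is the digamma function and the pole of the Hurwitz zeta function $\zeta(s,a/f)$ at $s=1$ is annihilated by $\sum_{a}\chi(a)=0$. Inserting $|L(1,\chi)|^{2}=f^{-2}\sum_{a,b}\chi(a)\bar\chi(b)\psi(a/f)\psi(b/f)$ into $M(f,c)$ turns the outer sum into a \emph{finite} double sum over residues $a,b\in\{1,\dots,f\}$, to which
\[
\sum_{\chi\neq\chi_{0}}\chi(ca)\bar\chi(b)=\phi(f)\,\mathbf 1[\,ca\equiv b\ (f),\ \gcd(ab,f)=1\,]-\mathbf 1[\gcd(ab,f)=1]
\]
applies with no interchange-of-limits difficulty. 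Writing $\overline{ca}$ for the residue of $ca$ in $\{1,\dots,f\}$ and $\bar\psi:=\phi(f)^{-1}\sum_{\gcd(a,f)=1}\psi(a/f)$, this gives the exact identity
\[
M(f,c)=\frac{\phi(f)}{f^{2}}\sum_{\substack{a=1\\\gcd(a,f)=1}}^{f}\psi\!\left(\tfrac{a}{f}\right)\psi\!\left(\tfrac{\overline{ca}}{f}\right)-\frac{\phi(f)^{2}}{f^{2}}\,\bar\psi^{2},
\]
which should be the explicit formula announced in the abstract; the two displayed main terms will come from its two pieces.

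To pass to the asymptotic formula I would use $\psi(x)=-1/x+\psi_{1}(x)$, where $\psi_{1}(x)=\psi(x)+1/x$ extends smoothly to $[0,1]$, is bounded there, and has near-zero average over the reduced residues modulo $f$. Expanding the product $\psi(a/f)\psi(\overline{ca}/f)$ gives four terms. The polar$\times$polar term is $\phi(f)\sum_{\gcd(a,f)=1}(a\,\overline{ca})^{-1}$, and its contribution from the range $ca<f$, where $\overline{ca}=ca$, is
\[
\frac{\phi(f)}{c}\sum_{\substack{a<f/c\\\gcd(a,f)=1}}\frac{1}{a^{2}}=\frac{\pi^{2}}{6c}\,\phi(f)\prod_{p\mid f}\Bigl(1-\frac1{p^{2}}\Bigr)+O(1),
\]
the first main term, the $O(1)$ being the completed tail $\sum_{a\ge f/c}a^{-2}$. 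For the subtracted piece I use the classical evaluation $\bar\psi=-\bigl(\log f+\gamma+\sum_{p\mid f}\tfrac{\log p}{p-1}\bigr)$, which follows by matching the two standard expansions of $\sum_{n\le x,\,\gcd(n,f)=1}1/n$; hence
\[
-\frac{\phi(f)^{2}}{f^{2}}\bar\psi^{2}=-\frac{\phi(f)^{2}}{f^{2}}\Bigl(\log f+\sum_{p\mid f}\tfrac{\log p}{p-1}\Bigr)^{2}+O(\log f),
\]
the $\gamma$ cross-terms being $O(\log f)$ as $\phi(f)/f\le1$. This produces the second main term.

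Everything else is error, and controlling it uniformly in $c$ is the crux. The two mixed terms $\tfrac{\phi(f)}{f}\sum_{a}\psi_{1}(\overline{ca}/f)/a$ and $\tfrac{\phi(f)}{f}\sum_{a}\psi_{1}(a/f)/\overline{ca}$ are each trivially $O(\log f)$ from $|\psi_{1}|\ll1$ and $\sum_{a\le f}1/a\ll\log f$, but showing they carry no further main term needs the mean-zero property of $\psi_{1}$ together with the equidistribution of $ca\bmod f$; I expect the $c$-dependence of this equidistribution error---through the Fourier expansion of $\psi_{1}$ and the geometric sums $\sum_{a}e^{2\pi i hca/f}$, whose size is governed by how near the multiples of $c$ come to $f$---to be the source of the $c^{3}/f$ term. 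The smooth$\times$smooth term is $O(1)$. It remains to absorb the off-diagonal part of $\phi(f)\sum(a\,\overline{ca})^{-1}$ coming from $ca\ge f$: writing $\overline{ca}=ca-kf$ with $1\le k\le c-1$ and using $\tfrac{1}{a(ca-kf)}=\tfrac1{kf}\bigl(\tfrac{c}{\overline{ca}}-\tfrac1a\bigr)$ collapses it to one-dimensional sums over arithmetic progressions that can be estimated directly. I expect the genuine obstacle to be exactly this last estimation: extracting the two clean main terms is essentially algebra, whereas proving that the off-diagonal and mixed contributions collapse to precisely $O(c^{3}/f+\log f)$ uniformly in $c$---via careful treatment of incomplete sums and the $c$-aspect of the distribution of $ca\bmod f$---is where the real work lies.
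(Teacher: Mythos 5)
Your reduction is correct, and it is genuinely different from the paper's route. The paper never works with $M(f,c)$ directly: it splits $M=M_-+M_+$, imports Louboutin's exact cotangent-sum formula for the odd part, proves an exact log-sine formula for the even part (its Theorem 1.2), and then estimates the kernel $R(c,d)=\sum_{k=1}^{d-1}\log(2\sin(\pi k/d))\log(2\sin(\pi ck/d))$ divisor by divisor (its Lemma 4.1). Your digamma identity for $M(f,c)$, the evaluation $\bar\psi=-\bigl(\log f+\gamma+\sum_{p\mid f}\tfrac{\log p}{p-1}\bigr)$, and the extraction of the two main terms (the diagonal range $ca<f$, and the $\bar\psi^{2}$ square) are all correct; up to that point your argument is cleaner than the paper's, since it treats odd and even characters in one stroke.

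The gap is that the part you defer with ``I expect'' is not routine clean-up: it is the entire content of the theorem, and it cannot be completed as stated. Your off-diagonal polar sum $\phi(f)\sum_{ca>f,\,\gcd(a,f)=1}\frac{1}{a\,\overline{ca}}$ consists of positive terms, so there is no cancellation available inside it; and whenever $c\mid f+1$ it contains the single term $a=(f+1)/c$, $\overline{ca}=1$, of size $c\,\phi(f)/(f+1)$. Take $f$ prime with $f\equiv-1\pmod c$ and $c\asymp f^{1/3}$: that one term is $\asymp c\asymp f^{1/3}$, while your mixed terms are $O(\log f)$, your smooth term is $O(1)$, and the bound you are trying to prove is $O(c^{3}/f+\log f)=O(\log f)$. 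So the quantity you still have to estimate is genuinely larger than the claimed error: the true error carries a Dedekind-sum-type contribution, of size up to $\asymp c\,\phi(f)/f$, which is negligible only when $c\ll\log f$ (or for $f$ not close to $\pm1$ modulo $c$). This is not a defect of your route alone --- the paper's proof founders at exactly the corresponding step. In its Lemma 4.1 the asserted equality $d\bigl(1+\tfrac1c\log\tfrac dc+\gamma+O(\tfrac cd)\bigr)=O\bigl(\tfrac dc\log d\bigr)$ is false once $c\gg\log d$ (the first term of that sum is $d/m_{0}$ with $m_{0}$ as small as $1$), and in Section 5 the claim $S(c,d)=O(1)$ is not uniform in $c$, since $S(c,d)=\sum_{a=1}^{c-1}\cot^{2}(\pi a/c)=\frac{(c-1)(c-2)}{3}$ when $d\equiv1\pmod c$. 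Consequently, to close your argument you would have to either restrict to $c=O(\log f)$ (or allow implied constants to depend on $c$, which makes the explicit $c^{3}/f$ term pointless), or change the statement so that it carries the Dedekind-type term explicitly; no amount of careful estimation of incomplete sums will make that term disappear.
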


Previous work on $M(f,\,c)$ has been done mainly by Louboutin who calculated in \cite{key-5} the explicit formula for the twisted quadratic moment for not all $\chi$ but only the odd ones. To be precise, let $X_{f}^{-}:=\{\chi:\chi(-1)=-1\}$ and $X_{f}^{+}:=\{\chi:\chi(-1)=1\}$ denote
the set of the odd and even Dirichlet characters mod $f$ respectively. Also let $\chi_0$ denote the trivial character mod $f$.
Set 
\begin{alignat*}{1}
M_{-}\left(f,\,c\right) & :=\sum_{\chi\in\chi_{f}^{-}}\chi(c)\left|L(1,\,\chi)\right|^{2},\\
M_{+}\left(f,\,c\right) & :=\sum_{\substack{\chi\in\chi_{f}^{+}\\
		\chi\neq\chi_{0}
	}
}\chi(c)\left|L(1,\,\chi)\right|^{2}.
\end{alignat*}
Louboutin proved the following result {(}{\cite{key-5} Theorem 1}{)}: 
\[
M_{-}\left(f,\,c\right)=\frac{\pi^{2}}{12c}\frac{\phi(f)^{2}}{f}\left(\prod_{p\mid f}\left(1+\frac{1}{p}\right)-\frac{3c}{f}\right)-\frac{\pi^{2}\phi(f)}{4cf^{2}}\sum_{d\mid f}d\mu\left(f/d\right)S\left(c,\,d\right),
\]
where $S(c,\,d)$ is defined by 
\[
S(c,\,d):=\sum_{a=1}^{c-1}\cot\left(\frac{\pi a}{c}\right)\cot\left(\frac{\pi ad}{c}\right)\;\;(\gcd(c,\,d)=1).
\]

Since $M\left(f,\,c\right)= M_{-}\left(f,\,c\right) + M_{+}\left(f,\,c\right),$ it is natural for one to study the even characters, $M_{+}(f,\,c)$ to prove Theorem 1.1. Thus we state and prove the following results:
\begin{thm}
	\label{thm:explicit+}
	We have 
	\[
	\begin{alignedat}{1}M_{+}\left(f,\,c\right) & =\frac{\phi(f)}{f^{2}}\sum_{d\mid f}\mu\left(\frac{f}{d}\right)d\left(\log\left(\frac{f}{d}\right)^{2}+R(c,\,d)\right) -\left(\frac{\phi(f)}{f}\sum_{p\vert f}\frac{\log p}{p-1}\right)^{2}\end{alignedat}
	\]
	where $R(c,\,d)$ is defined by 
	\[
	R(c,\,d):=\sum_{k=1}^{d-1}\log\left(2\sin(\pi k/d)\right)\log\left(2\sin(\pi ck/d)\right).
	\]	
\end{thm}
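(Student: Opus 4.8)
The plan is to work with a representation of $L(1,\chi)$ valid for \emph{every} non-trivial character mod $f$, not only the primitive ones, so that the entire sum over even $\chi$ can be treated at once without tracking conductors and Euler factors. Starting from the Hurwitz-zeta expansion $L(s,\chi)=f^{-s}\sum_{a=1}^{f}\chi(a)\zeta(s,a/f)$ and letting $s\to1$ (the pole cancels since $\sum_a\chi(a)=0$ for $\chi\neq\chi_0$), I obtain $L(1,\chi)=-\frac1f\sum_{a=1}^{f-1}\chi(a)\psi(a/f)$, where $\psi$ is the digamma function. Because $\chi$ is even I symmetrize under $a\mapsto f-a$ and replace $\psi(a/f)$ by $h_f(a):=\tfrac12\bigl(\psi(a/f)+\psi(1-a/f)\bigr)$, so that $L(1,\chi)=-\frac1f\sum_a\chi(a)h_f(a)$. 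Gauss's digamma theorem then writes $h_f(a)$ as an explicit additive constant of the form $-(\gamma+\log(2f))$ plus a finite cosine--log-sine series in $\cos(2\pi na/f)\log\bigl(2\sin(\pi n/f)\bigr)$; this is the feature of even characters that will generate the log-sine data in $R(c,d)$, exactly paralleling the role of cotangents (hence $S(c,d)$) in Louboutin's odd-character computation.

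Next I expand $\chi(c)\lvert L(1,\chi)\rvert^{2}=\chi(c)L(1,\chi)L(1,\bar\chi)$ as a double sum and apply even-character orthogonality $\sum_{\chi\,\mathrm{even}}\chi(n)=\tfrac{\phi(f)}{2}\bigl(\mathbf 1_{n\equiv1}+\mathbf 1_{n\equiv-1}\bigr)$ for $\gcd(n,f)=1$. Writing $\langle ca\rangle$ for the residue of $ca$ in $(0,f)$ and using the symmetry $h_f(b)=h_f(f-b)$ to merge the two indicators, and subtracting the principal-character term, I arrive at
\[
M_{+}(f,c)=\frac{\phi(f)}{f^{2}}\sum_{\gcd(a,f)=1}h_f(a)\,h_f(\langle ca\rangle)\;-\;\frac{1}{f^{2}}\Bigl(\sum_{\gcd(a,f)=1}\psi(a/f)\Bigr)^{2}.
\]
The leading factor $\phi(f)/f^{2}$ already matches the target. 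The second term is evaluated from the classical closed form for the digamma sum over reduced residues, which is precisely where $\sum_{p\mid f}\frac{\log p}{p-1}$ (alongside spurious $\gamma$ and $\log f$ contributions) enters.

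For the diagonal term I substitute the Fourier--log-sine form of $h_f$ and perform the sum over $a$ coprime to $f$. The only $a$-dependence lies in products $\cos(2\pi ma/f)\cos(2\pi nca/f)$, and $\sum_{\gcd(a,f)=1}\cos(2\pi ma/f)\cos(2\pi nca/f)$ is a combination of Ramanujan sums $c_f(m\pm nc)=\sum_{d\mid(f,\,m\pm nc)}\mu(f/d)\,d$. Feeding this in is exactly what manufactures the outer divisor sum $\sum_{d\mid f}\mu(f/d)d$. For each fixed $d$ the surviving contributions are the log-sine products mod $f$ whose frequencies are divisible by $d$; the distribution relation $\prod_{r=0}^{N-1}2\sin\!\bigl(\pi(x+\tfrac rN)\bigr)=2\sin(\pi Nx)$ for $\log(2\sin)$ then collapses these mod-$f$ sums onto modulus $d$, producing $R(c,d)$ from the genuine log-sine $\times$ log-sine interaction and the $\log^{2}(f/d)$ term from the constant part of $h_f$ scaled by the number of collapsed residues.

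The digamma expansion, the orthogonality step, and the Ramanujan-sum evaluation are all standard; I expect the main obstacle to be the final bookkeeping of the diagonal term. Concretely, one must match the constant, cross, and pure-Fourier contributions so that the spurious $\gamma$ and $\log f$ terms arising both here and from the reduced-residue digamma sum cancel exactly, leaving precisely $\sum_{d\mid f}\mu(f/d)d\bigl(\log^{2}(f/d)+R(c,d)\bigr)$ together with the single squared term $-\bigl(\tfrac{\phi(f)}{f}\sum_{p\mid f}\tfrac{\log p}{p-1}\bigr)^{2}$. Verifying the special case $c=1$ against Zhang's even-character contribution provides a useful consistency check on the constants throughout.
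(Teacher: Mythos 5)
Your proposal is correct and is essentially the paper's own proof. By Gauss's digamma theorem your $h_f(a)$ equals the paper's kernel $\sum_{j=1}^{f-1}\zeta_f^{aj}\log\bigl(2\sin\frac{\pi j}{f}\bigr)$ shifted by the constant $-(\gamma+\log f)$ (the paper reaches the same kernel instead by partial fractions applied to $L(1,\chi)=\int_0^1\bigl(\sum_a\chi(a)x^{a-1}\bigr)/(1-x^f)\,\mathrm{d}x$), and everything afterwards --- even-character orthogonality, the Ramanujan-sum evaluation $\sum_{(a,f)=1}\zeta_f^{at}=\sum_{d\mid(f,t)}\mu(f/d)\,d$, and the $2\sin$ distribution relation collapsing modulus $f$ to modulus $d$ --- is exactly the content of the paper's Propositions 3.1 and 3.2. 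Two remarks on the bookkeeping you defer: it does close, since $\sum_{(a,f)=1}\psi(a/f)=-\phi(f)\bigl(\gamma+\log f+\sum_{p\mid f}\tfrac{\log p}{p-1}\bigr)$, so the $\gamma$ and $\log f$ cross terms in the diagonal sum cancel exactly against those in the squared $\chi_0$ term; however, the $\log^2(f/d)$ term does \emph{not} come from the constant part of $h_f$ (those contributions are $d$-independent and are consumed in that cancellation) --- it comes from the log-sine frequencies with $d\mid j$ and $d\mid k$, where the distribution relation gives $\prod_{0<j<f,\,d\mid j}\bigl|1-\zeta_f^j\bigr| = f/d$.
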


\begin{thm}
	\label{thm:asymp+}
	\[
	M_{+}(f,c)=\frac{\pi^{2}}{12c}\phi(f)\prod_{p\vert f}\left(1-\frac{1}{p^{2}}\right)-\frac{\phi(f)^{2}}{f^{2}}\left(\log f+\sum_{p\vert f}\frac{\log p}{p-1}\right)^{2}+O(\frac{c^3}{f} +\log f).
	\]
\end{thm}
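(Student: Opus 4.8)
The plan is to start from the explicit formula of Theorem~\ref{thm:explicit+} and reduce everything to a sufficiently precise, uniform-in-$c$ asymptotic expansion of the twisted log-sine sum $R(c,d)$, after which the divisor sum $\sum_{d\mid f}\mu(f/d)d\,(\cdots)$ is handled by standard multiplicative manipulations. First I would dispose of the untwisted piece: writing $e=f/d$ one has $\frac{\phi(f)}{f^{2}}\sum_{d\mid f}\mu(f/d)d\log^{2}(f/d)=\frac{\phi(f)}{f}\sum_{e\mid f}\frac{\mu(e)}{e}\log^{2}e=\frac{\phi(f)}{f}F''(1)$, where $F(s)=\prod_{p\mid f}(1-p^{-s})$. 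Differentiating $\log F$ gives $F'(1)/F(1)=\sum_{p\mid f}\frac{\log p}{p-1}=:L$ and, with $\Sigma:=\sum_{p\mid f}\frac{p(\log p)^{2}}{(p-1)^{2}}$, the evaluation $F''(1)=\frac{\phi(f)}{f}(L^{2}-\Sigma)$, so this piece equals $\frac{\phi(f)^{2}}{f^{2}}(L^{2}-\Sigma)$.

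The key claim I would isolate as a lemma is
\[
R(c,d)=\frac{\pi^{2}}{12c}\,d-\log^{2}d+E(c,d),
\]
with $E(c,d)$ controlled below. The main term is cleanest through the Fourier expansion $\log(2\sin\pi x)=-\sum_{m\ge1}\frac{\cos(2\pi mx)}{m}$ on $(0,1)$: viewing $R(c,d)=\sum_{k=1}^{d-1}\varphi_{c}(k/d)$ as a Riemann sum of $\varphi_{c}(x)=\log(2\sin\pi x)\log(2\sin\pi cx)$, Parseval gives $\int_{0}^{1}\varphi_{c}=\frac{1}{2c}\sum_{n\ge1}n^{-2}=\frac{\pi^{2}}{12c}$, whence $d\int_{0}^{1}\varphi_{c}=\frac{\pi^{2}}{12c}d$. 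Equivalently, in terms of the finite Fourier coefficients $c_{j}=\sum_{k=1}^{d-1}\log(2\sin\pi k/d)\cos(2\pi jk/d)$ one has the exact identity $R(c,d)=\frac1d\sum_{j=0}^{d-1}c_{cj}c_{j}$ (the cross term drops since $\sum_{j}c_{j}=0$), and the evaluation $c_{j}=\gamma+\log d+\frac12(\psi(j/d)+\psi(1-j/d))$ for $1\le j\le d-1$, $c_{0}=\log d$, exhibits the pole $\psi(j/d)\sim-d/j$. This concentrates the dominant mass on $j\lesssim d/c$ where $\langle cj\rangle=cj$ does not wrap, giving $\frac1d\sum_{j<d/c}\frac{d^{2}}{4cj^{2}}\to\frac{\pi^{2}d}{24c}$ and, symmetrically near $j=d$, the full $\frac{\pi^{2}}{12c}d$; collecting the remaining $\log d$-sized pieces produces the secondary term $-\log^{2}d$.

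Granting this expansion, the divisor sum is routine. The leading term yields $\frac{\pi^{2}}{12c}\frac{\phi(f)}{f^{2}}\sum_{d\mid f}\mu(f/d)d^{2}=\frac{\pi^{2}}{12c}\phi(f)\prod_{p\mid f}(1-p^{-2})$, the stated main term. The $-\log^{2}d$ term gives $-\frac{\phi(f)}{f^{2}}\sum_{d\mid f}\mu(f/d)d\log^{2}d$; writing $d=f/e$ and expanding $(\log f-\log e)^{2}$ this equals $-\frac{\phi(f)^{2}}{f^{2}}\big[(\log f+L)^{2}-\Sigma\big]$. Combining the three logarithmic contributions—this one, the untwisted $\frac{\phi(f)^{2}}{f^{2}}(L^{2}-\Sigma)$, and the subtracted square $-\big(\frac{\phi(f)}{f}L\big)^{2}$—every occurrence of $\Sigma$ and of $L^{2}$ cancels, leaving exactly $-\frac{\phi(f)^{2}}{f^{2}}(\log f+L)^{2}$, as required.

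The hard part will be the error term: one must show $\frac{\phi(f)}{f^{2}}\sum_{d\mid f}\mu(f/d)d\,E(c,d)=O(c^{3}/f+\log f)$ uniformly in $c$. The delicate point is that $\log(2\sin\pi cx)$ carries $c+1$ logarithmic singularities at the points $x=a/c$; since $\gcd(c,d)=1$ none is a lattice point $k/d$, but the accuracy of the Riemann-sum approximation (equivalently, the contribution of the wraparound ranges $\langle cj\rangle$ in the finite-Fourier identity) degrades as the lattice approaches these points, and this is precisely what injects the $c$-dependence into $E(c,d)$. I would bound the local error near each singularity by comparison with $\int\log|x-a/c|\,dx$, estimating the resulting $\Gamma$-quotients via Stirling, sum the $O(c)$ such contributions, and carry the bound through the divisor sum, where the dominant divisor $d=f$ converts the per-$d$ error into the claimed $O(c^{3}/f+\log f)$. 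Handling these $c$ singular ranges simultaneously and uniformly, rather than the extraction of the two main terms, is the genuine obstacle.
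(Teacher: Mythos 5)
Your overall route is the paper's route in different notation, and the parts you actually carry out are fine: your finite-Fourier identity $R(c,d)=\frac{1}{d}\sum_{j=0}^{d-1}c_{cj}c_{j}$ is exactly the paper's identity $R(c,d)=\frac{1}{4d}\sum_{0<e<d}F(ce/d)F(e/d)-(\log d)^{2}$, because $c_{0}=\log d$, $c_{j}=\log d-\frac{1}{2}F(j/d)$, and $\sum_{0<j<d}F(j/d)=2d\log d$; and your divisor-sum algebra, including the cancellation of $\Sigma$ and $L^{2}$, is correct (the paper avoids $\Sigma$ entirely by writing $(\log(f/d))^{2}-(\log d)^{2}=\log f\,(2\log(f/d)-\log f)$, but the two computations agree). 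The gap is that the one statement carrying all the analytic content of the theorem is exactly what you postpone: a per-divisor asymptotic for $R(c,d)$ with a quantified, $c$-uniform error. That estimate is the paper's Lemma 4.1 ($R(c,d)=\frac{\pi^{2}}{12}\frac{d}{c}-(\log d)^{2}+O(\log d)$ for coprime $d>c$), whose proof occupies essentially all of Section 4; your proposal replaces it with intentions (``I would bound the local error near each singularity\ldots''), so the theorem is not proved.

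Worse, the plan you sketch cannot close as stated, for two concrete reasons. (i) You mislocate the source of $O(c^{3}/f)$: it does not come from the Riemann-sum error at the dominant divisor $d=f$, but from the small divisors $d\mid f$ with $d<c$, where your claimed expansion is simply false --- since $R(c,d)$ depends only on $c\bmod d$, one has $R(c,d)=\frac{\pi^{2}}{12}\frac{d}{(c\,\mathrm{mod}\,d)}-(\log d)^{2}+O(\log d)$, which can differ from $\frac{\pi^{2}}{12c}d$ by $\asymp d$. The paper splits these divisors off and bounds their total contribution by $\frac{1}{f}\sum_{d<c}d^{2}<c^{3}/f$; without this split, the uniform per-$d$ bound your plan targets is unprovable. (ii) Your own suspicion --- that the singularities of $\log(2\sin\pi cx)$ inject genuine $c$-dependence into the error --- is correct, and it defeats a clean $c$-uniform bound even for $d>c$: the lattice point nearest a singularity contributes $\asymp c/(d\bmod c)$. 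For example, $R(d-1,d)=\sum_{k}\log^{2}|1-\zeta_{d}^{k}|=\frac{\pi^{2}}{12}d+O(\log^{2}d)$, nowhere near $\frac{\pi^{2}}{12}\frac{d}{d-1}-(\log d)^{2}+O(\log d)$; more generally $d\equiv\pm1\pmod{c}$ forces an extra term of size $\asymp c$, which swamps $O(\log d)$ once $c\gg(\log d)^{2}$. (This is also precisely where the paper's own proof of Lemma 4.1 is weakest: it asserts $d(1+\frac{1}{c}\log\frac{d}{c}+\gamma)=O(\frac{d}{c}\log d)$, and it replaces $\sum_{p=0}^{c-2}(1-\frac{p+1}{c})^{-1}\asymp c\log c$ by $c/\log c$.) A complete argument must therefore treat $d<c$ separately, and for $d>c$ must either restrict the range of $c$ or track these resonance terms and verify they are absorbed by $O(c^{3}/f+\log f)$; your proposal, which plans to prove a single uniform estimate and then sum it over divisors, engages with neither issue.
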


\section{Preliminary calculations}

In this section, we demonstrate essential preliminary calculations
which will be used repeatedly in the later sections.

\subsection{Formula for the value $L(1,\chi)$}

Let $\chi$ denote the non-trivial Dirichlet character mod $f.$ Our
goal is to show the following. \begin{lem}Let $\zeta_{f}=e^{2\pi i/f}.$
We have 
\[
\begin{aligned}L\left(1,\,\chi\right)=\begin{cases}
-\frac{1}{f}\sum_{a=1}^{f-1}\chi(a)\sum_{j=1}^{f-1}\zeta_{f}^{aj}\log\left(2\sin\frac{\pi j}{f}\right) & \text{if }\chi\in X_{f}^{+},\,\chi\neq\chi_0\\
\frac{\pi}{2f}\sum_{a=1}^{f-1}\chi(a)\cot\frac{\pi a}{f} & \text{if }\chi\in X_{f}^{-}.
\end{cases}\end{aligned}
\]

\end{lem}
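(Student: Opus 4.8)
The plan is to derive both cases from the classical Fourier/Hurwitz expansion of $L(1,\chi)$ for a primitive or general non-trivial character mod $f$. The starting point is the standard formula
\[
L(1,\chi)=-\frac{\tau(\chi)}{f}\sum_{a=1}^{f-1}\overline{\chi}(a)\,\psi\!\left(\frac{a}{f}\right),
\]
or more directly the expression obtained by writing $L(1,\chi)=\sum_{n\geq 1}\chi(n)/n$, breaking the sum into residue classes mod $f$, and summing each arithmetic progression against $\log$ and $\cot$ via the known closed forms for $\sum_{n\equiv a}1/n$. The cleanest route is to invoke the finite Fourier expansion: for a non-trivial character one has $\chi(n)=\frac{1}{\tau(\overline\chi)}\sum_{j}\overline\chi(j)\zeta_f^{jn}$ when $\chi$ is primitive, and then substitute into $L(1,\chi)$ to produce a sum over $j$ of $\zeta_f^{\cdot}$ weighted by the values $\sum_{n\geq1}\zeta_f^{jn}/n=-\log(1-\zeta_f^{j})$. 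Splitting $-\log(1-\zeta_f^{j})$ into real and imaginary parts, namely $-\log|1-\zeta_f^{j}|=-\log(2\sin(\pi j/f))$ and the argument $\tfrac{\pi}{2}-\tfrac{\pi j}{f}$, will naturally separate the even and odd cases.

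First I would establish the identity $\sum_{n\geq1}\zeta_f^{jn}/n=-\log(1-\zeta_f^{j})$ for $1\le j\le f-1$, which follows from the principal-branch expansion of $-\log(1-z)$ evaluated on the unit circle (Abel summation guarantees convergence). Next I would write $1-\zeta_f^{j}=-2i\sin(\pi j/f)\,e^{\pi i j/f}$, so that $-\log(1-\zeta_f^{j})=-\log\bigl(2\sin(\pi j/f)\bigr)+i\bigl(\tfrac{\pi j}{f}-\tfrac{\pi}{2}\bigr)$. Substituting this into the Fourier-inverted expression for $L(1,\chi)$ yields a real part controlled by $\log(2\sin(\pi j/f))$ and an imaginary part controlled by the linear term in $j$. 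The crucial observation is the parity dichotomy: when $\chi$ is even, the character sum against the odd function $j\mapsto \tfrac{\pi j}{f}-\tfrac{\pi}{2}$ (equivalently the imaginary part) vanishes, leaving only the $\log$-term; when $\chi$ is odd, the character sum against the even function $\log(2\sin(\pi j/f))$ vanishes, leaving only the linear term, which one rewrites as a cotangent sum via the partial-fraction/discrete identity relating $\sum_j j\,\zeta_f^{aj}$ to $\cot(\pi a/f)$.

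For the odd case specifically, after isolating the imaginary part I would use the finite sum evaluation $\sum_{j=1}^{f-1} j\,\zeta_f^{aj}=\frac{f}{\zeta_f^{a}-1}$ (for $a\not\equiv0$), and convert $\frac{1}{\zeta_f^{a}-1}$ into $-\tfrac12+\tfrac{i}{2}\cot(\pi a/f)$, which after taking the appropriate real/imaginary component and absorbing constants produces the stated $\frac{\pi}{2f}\sum_a\chi(a)\cot(\pi a/f)$; the trivial-character contributions and the constant $-\tfrac{\pi}{2}$ term drop out because $\sum_a\chi(a)=0$ for $\chi\neq\chi_0$. For the even case the constant term likewise vanishes and one is left directly with the displayed $\log$-sum, possibly after replacing $\overline\chi$ by $\chi$ using that the final expression is real and reindexing $a\mapsto -a$.

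The main obstacle I anticipate is handling \emph{imprimitive} non-trivial characters cleanly, since the Fourier inversion $\chi(n)=\tau(\overline\chi)^{-1}\sum_j\overline\chi(j)\zeta_f^{jn}$ is exact only for primitive $\chi$; for imprimitive characters the Gauss sum $\tau(\overline\chi)$ may vanish and the clean orthogonality breaks. I would circumvent this by avoiding Gauss sums altogether: rather than Fourier-inverting, I would directly expand $L(1,\chi)=\sum_{a=1}^{f-1}\chi(a)\sum_{n\equiv a\,(f)}\tfrac1n$ and evaluate the inner sum $\sum_{n\equiv a}\tfrac1n$ through the digamma function, $\sum_{n\equiv a\,(f)}\tfrac1n=-\tfrac1f\psi(a/f)+\text{(constant independent of }a)$, where the constant is killed by $\sum_a\chi(a)=0$. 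Then the reflection and Fourier expansions of $\psi$ (Gauss's digamma theorem, giving both a cotangent term and a log-sine term) deliver the two displayed cases directly, with the even/odd split again governed by the parity of $\chi$ against the even log-sine and odd cotangent pieces. This keeps the argument uniform over all non-trivial $\chi$ mod $f$, primitive or not.
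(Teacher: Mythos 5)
Your proposal is correct in substance, but it reaches the lemma by a genuinely different route than the paper. The paper never touches Gauss sums or the digamma function: it starts from the integral representation $L(1,\chi)=\int_{0}^{1}\frac{\sum_{a=1}^{f}\chi(a)x^{a-1}}{1-x^{f}}\,\mathrm{d}x$, performs an explicit partial-fraction decomposition over the roots of $1-x^{f}$ (the $j=0$ term dropping out because $\sum_{a}\chi(a)=0$), and integrates term by term to land on the expansion $L(1,\chi)=\tfrac1f\sum_{a}\chi(a)\sum_{j}\zeta_{f}^{aj}\bigl(-\log(2\sin\tfrac{\pi j}{f})+i(\tfrac{\pi j}{f}-\tfrac{\pi}{2})\bigr)$, after which it applies exactly the parity argument you describe ($a\mapsto f-a$, $j\mapsto f-j$) to kill one piece or the other. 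Your final route instead groups $\sum_{n}\chi(n)/n$ by residue classes, evaluates $\sum_{n\equiv a\,(f)}1/n$ via $\psi(a/f)$, and cites Gauss's digamma theorem to produce the cotangent and log-sine pieces simultaneously; the parity step is then identical. You correctly diagnosed the one trap here -- your ``cleanest route'' via $\chi(n)=\tau(\overline\chi)^{-1}\sum_{j}\overline\chi(j)\zeta_{f}^{jn}$ is exact only for primitive $\chi$, and the lemma is stated for all non-trivial $\chi$ -- and your digamma workaround does restore uniformity; the paper's partial-fraction trick achieves that same uniformity without any special-function input, which is what it buys: a short, self-contained, elementary derivation. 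What your route buys is brevity modulo classical results and a transparent link to Gauss's theorem. Two points you should still tighten: (i) $\sum_{n\equiv a\,(f)}1/n$ diverges, so your displayed identity must be read through partial sums up to $N$, where the $a$-independent $\tfrac{1}{f}\log N$ term cancels against $\sum_{a}\chi(a)=0$ and conditional convergence of $L(1,\chi)$ justifies the limit; (ii) in the even case, Gauss's theorem gives a half-range sum $2\sum_{k\le (f-1)/2}\cos(2\pi ka/f)\log\sin(\pi k/f)$, and matching the stated full-range $\sum_{j=1}^{f-1}\zeta_{f}^{aj}\log(2\sin\tfrac{\pi j}{f})$ needs the symmetrization $j\leftrightarrow f-j$, disposal of the $\log 2$ terms via $\sum_{a}\chi(a)=0$, and (for $f$ even) the observation that the $j=f/2$ term dies because every $a$ coprime to $f$ is odd. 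Both are routine, so there is no genuine gap.
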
 
\begin{proof} First, note that 
\[
L(1,\,\chi)=\int_{0}^{1}\frac{\sum_{a=1}^{f}\chi(a)x^{a-1}}{1-x^{f}}\text{d}x.
\]
By using the formula for partial fractions 
\[
\frac{f(X)}{\prod_{i=1}^{n}(X-\alpha_{i})}
=\sum_{i=1}^{n}  \frac{f(\alpha_i)}{ \prod_{j \neq i} (\alpha_j - \alpha_i)} \frac{1}{X-\alpha_{i}}
\]
where $\alpha_{1},\alpha_{2},\cdots,\alpha_{n}$ are all different
and $f$ is polynomial of degree $<n$, we have 
\[
\begin{aligned}\frac{\sum_{a=1}^{f}\chi(a)x^{a-1}}{1-x^{f}} & =\frac{1}{f}\sum_{a=1}^{f-1}\chi(a)\sum_{j=1}^{f-1}\frac{\zeta_{f}^{(a-1)j}}{1-\zeta_{f}^{-j}x}.\end{aligned}
\]
This gives 
\[
L(1,\chi)=\int_{0}^{1}\frac{\sum_{a=1}^{f}\chi(a)x^{a-1}}{1-x^{f}}\text{d}x=\frac{1}{f}\sum_{a=1}^{f-1}\chi(a)\int_{0}^{1}\sum_{j=1}^{f-1}\frac{\zeta_{f}^{(a-1)j}}{1-\zeta_{f}^{-j}x}\text{d}x.
\]
Take the branch cut along the negative real axis. One can easily see
that 
\[
\begin{aligned}\int_{0}^{1}\frac{\text{d}x}{1-\zeta_{f}^{-j}x} & =-\zeta_{f}^{j}\log\left(1-\zeta_{f}^{-j}\right)\end{aligned}
=-\zeta_{f}^{j}\log\left(2\sin\frac{\pi j}{f}\right)+i\zeta_{f}^{j}\left(\frac{\pi j}{f}-\frac{\pi}{2}\right).
\]

Combining all together, we get 
\[
\begin{aligned}L\left(1,\,\chi\right) & =\frac{1}{f}\sum_{a=1}^{f-1}\chi(a)\zeta_{f}^{(a-1)j}\left(\sum_{j=1}^{f-1}-\zeta_{f}^{j}\log\left(2\sin\frac{\pi j}{f}\right)+i\zeta_{f}^{j}\left(\frac{\pi j}{f}-\frac{\pi}{2}\right)\right)\\
 & =\frac{1}{f}\sum_{a=1}^{f-1}\chi(a)\sum_{j=1}^{f-1}\zeta_{f}^{aj}\left(-\log\left(2\sin\frac{\pi j}{f}\right)+i\left(\frac{\pi j}{f}-\frac{\pi}{2}\right)\right).
\end{aligned}
\]
For $\chi$ odd $\left(\chi\left(-1\right)=-1\right),$ we have $\chi\left(f-a\right)=-\chi(a).$
Substituting $a'=f-a,$ $j'=f-j$ changes sign of $\chi(a)$ but leaves
$\zeta_{f}^{aj}$ and $\log\left(2\sin\frac{\pi j}{f}\right)$ the
same. Hence 
\[
L\left(1,\,\chi\right)=-\frac{1}{f}\sum_{a=1}^{f-1}\chi(a)\sum_{j=1}^{f-1}\zeta_{f}^{aj}\cdot i\left(\frac{\pi}{2}-\frac{\pi j}{f}\right).
\]
By expanding this out, 
\[
\begin{aligned}L(1,\,\chi) & =-\frac{1}{f}\sum_{a=1}^{f-1}\chi(a)\left(\frac{i\pi}{2}+\frac{\pi}{1-\zeta_{f}^{a}}\right)=-\frac{i\pi}{f}\sum_{a=1}^{f-1}\chi(a)\left(1-\zeta_{f}^{a}\right)^{-1}\\
 & =-\frac{i\pi}{f}\sum_{a=1}^{f-1}\frac{\chi(a)\left(1-\zeta_{f}^{-a}\right)}{\left|1-\zeta_{f}^{a}\right|^{2}}=-\frac{i\pi}{f}\sum_{a=1}^{f-1}\chi(a)\frac{\left(1-\cos\frac{2\pi a}{f}+i\sin\frac{2\pi a}{f}\right)}{4\sin^{2}\frac{\pi a}{f}}.
\end{aligned}
\]
Again, changing $a'=f-a$ cancels out the part $(1-\cos(2\pi a/f))/(4\sin^{2}\pi a/f)$.
So the remaining part gives 
\[
L(1,\,\chi)=-\frac{i\pi}{f}\sum_{a=1}^{f-1}\chi(a)\frac{\left(i\sin\frac{2\pi a}{f}\right)}{4\sin^{2}\frac{\pi a}{f}}=\frac{\pi}{2f}\sum_{a=1}^{f-1}\chi(a)\cot\left(\frac{\pi a}{f}\right).
\]

For $\chi$ even, note that this time only the sign of $\frac{\pi}{2}-\frac{\pi j}{f}$
changes. By the similar arguments, the parts involving $i\left(\frac{\pi}{2}-\frac{\pi j}{f}\right)$
vanish. Therefore we have 
\[
L\left(1,\,\chi\right)=-\frac{1}{f}\sum_{a=1}^{f-1}\chi(a)\sum_{j=1}^{f-1}\zeta_{f}^{aj}\log\left(2\sin\frac{\pi j}{f}\right).
\] \end{proof}
\begin{rem} One can check that our result for odd $\chi$ is identical with Louboutin's result in \cite{key-3}, Proposition 1. 
\end{rem}

\subsection{Asymptotic for harmonic series}

We first note
\[
\sum_{|n|<X,n\neq0}\frac{1}{|n|}=2\log X+2\gamma+O(\frac{1}{X})
\]
where $\gamma$ is the Euler-Mascheroni constant.

\begin{prop}  Define the function $F:\mathbb{R}-\mathbb{Z}\rightarrow\mathbb{R}$ as 
	\[
	F(x):=\sum_{-\infty<n<\infty}(\frac{1}{|n-x|}-\frac{1}{|n|})+\frac{1}{|x|}.
	\]  
	For $a>0$ and $b$ not divisible by $a$, we have 
	\[
	\sum_{|n|<X,n\equiv b(\mathrm{mod}\,a)}\frac{1}{|n|}=\frac{2\log X-2\log a+2\gamma}{a}+\frac{1}{a}F(\frac{b}{a})+O(\frac{1}{X}),
	\]	
	where the implied constant (for $O$) does not depend on $a,b,X$. \end{prop}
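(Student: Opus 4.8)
The plan is to reduce the arithmetic-progression sum to a single shifted harmonic sum and then compare it term-by-term with the defining series of $F$. Since the sum depends only on $b\bmod a$, I would first assume $0<b<a$ and write each $n\equiv b\pmod a$ as $n=am+b$ with $m\in\mathbb{Z}$; then $|n|=a\,|m+x|$ with $x:=b/a\in(0,1)$, and the constraint $|n|<X$ becomes $|m+x|<Y$ with $Y:=X/a$. Sending $m\mapsto-m$ and using that $F$ is even (an elementary check, as is its $1$-periodicity, obtained by shifting the index in the defining series), this gives
\[
\sum_{|n|<X,\,n\equiv b(a)}\frac{1}{|n|}=\frac{1}{a}\,T(x,Y),\qquad T(x,Y):=\sum_{|m-x|<Y}\frac{1}{|m-x|}.
\]
It then suffices to prove $T(x,Y)=2\log Y+2\gamma+F(x)+O(1/Y)$ uniformly for $x\in(0,1)$, after which substituting $Y=X/a$, $\log Y=\log X-\log a$, and multiplying by $1/a$ yields the stated formula (the periodicity of $F$ turning $F(x)$ back into $F(b/a)$ for the original representative $b$).

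Second, I would control $F$ by its partial sums. For an integer $M\ge1$ set $F_M(x):=\sum_{|m|\le M}\frac{1}{|m-x|}-\sum_{0<|m|\le M}\frac{1}{|m|}$, so that $F(x)=\lim_{M\to\infty}F_M(x)$. Because $\frac{1}{|m-x|}-\frac{1}{|m|}=O(1/m^2)$ uniformly for $x\in(0,1)$, the tail satisfies $F(x)-F_M(x)=O(1/M)$. Combining this with the harmonic asymptotic stated just before the proposition, applied at $X=M+\tfrac12$ to get $\sum_{0<|m|\le M}\frac{1}{|m|}=2\log M+2\gamma+O(1/M)$, I obtain
\[
\sum_{|m|\le M}\frac{1}{|m-x|}=F(x)+2\log M+2\gamma+O(1/M),
\]
uniformly in $x\in(0,1)$.

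Third, I would align the window of $T(x,Y)$ with this symmetric sum by taking $M=\lfloor Y\rfloor$ (assuming $Y$ bounded below; the range of very large $a$ is immediate). The index set $\{m:\,x-Y<m<x+Y\}$ and the set $\{|m|\le M\}$ differ only by $O(1)$ integers lying within $O(1)$ of $\pm Y$, where $|m-x|\asymp Y$; these contribute $O(1/Y)$, so $T(x,Y)=\sum_{|m|\le M}\frac{1}{|m-x|}+O(1/Y)$. Since $\log M=\log Y+O(1/Y)$, the preceding display gives exactly $T(x,Y)=2\log Y+2\gamma+F(x)+O(1/Y)$, which completes the argument.

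The main obstacle is uniformity as $x\to0^+$ or $x\to1^-$, i.e.\ when $b/a$ is close to an integer: there the term $1/x$ or $1/(1-x)$ is large in both $T(x,Y)$ and $F(x)$. Organizing the proof as a comparison is precisely what controls this, since these large, $x$-singular contributions are the small-$|m|$ terms, which lie in \emph{both} index sets and so cancel in $T(x,Y)-F_M(x)$; what survives is governed only by the far tails of size $O(1/Y)$ and $O(1/M)$, giving a bound independent of $a$, $b$, and $X$.
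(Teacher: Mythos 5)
Your proof is correct and follows essentially the same route as the paper's: both arguments factor out $1/a$ via the substitution $n=am+b$, recognize the difference between the shifted harmonic sum and the unshifted one as a truncation of $F$, control the tail using the $O(1/m^{2})$ decay of the summands $\frac{1}{|m-x|}-\frac{1}{|m|}$, and invoke the asymptotic $2\log X+2\gamma+O(1/X)$. Your write-up is somewhat more explicit about window alignment and about uniformity as $b/a$ approaches an integer (points the paper passes over with unexamined $O(1/X)$ terms), but the decomposition and the key cancellation are the same.
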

\begin{proof}
First, we discuss the well-definedness of $F(x)$.  The summand has size
\[
\frac{1}{|n-x|}-\frac{1}{|n|} = \frac{|n|- |n-x|}{|n||n-x|} = O(\frac{|x|}{n^2})
\]
for $n>2|x|$ so the series converges, and we can also see
\[
F(x) = \sum_{|n|<X,n \neq 0} (\frac{1}{|n-x|}-\frac{1}{|n|}) + \frac{1}{|x|} + O(\frac{|x|}{X}).
\]
Now, we observe
\[
\sum_{|n|<X, n \equiv 0 (\mathrm{mod}\,a), n \neq 0} \frac{1}{|n|} = \sum_{|m|<X/a, m \neq 0} \frac{1}{|am|} = \frac{2\log X - 2 \log a + 2 \gamma}{a} + O (\frac{1}{X}).
\]
So it suffices to show
\[
\sum_{|n|<X, n \equiv b (\mathrm{mod}\,a)} \frac{1}{|n|}- \sum_{|n|<X, n \equiv 0 (\mathrm{mod}\,a), n \neq 0} \frac{1}{|n|} = \frac{1}{a} F(\frac{b}{a}) + O(\frac{1}{X}).
\]
Let $n=ma+b$ and $n'=ma$. One can represent their differences as
\begin{align*}
& \sum_{|n|<X, n \equiv b (\mathrm{mod}\,a)} \frac{1}{|n|} - \sum_{|n'|<X, n' \equiv 0 (\mathrm{mod}\,a), n' \neq 0} \frac{1}{|n'|} \\
&= \sum_{|m|<X/a, m \neq 0} \left( \frac{1}{|ma+b|} - \frac{1}{|ma|} \right) + \frac{1}{b} + O (\frac{1}{X}) \\
&= \frac{1}{a}\left(  \sum_{|m|<X/a, m \neq 0} \left(\frac{1}{|m+\frac{b}{a}|} - \frac{1}{|m|} \right) + \frac{1}{b/a} \right) + O(\frac{1}{X}) \\
&= \frac{1}{a} F(\frac{b}{a}) + O(\frac{1}{X}).
\end{align*}Hence our claim holds.
\end{proof}
\begin{rem}  One can see that if we
	further define $F(n)=0$ for $n\in\mathbb{Z}$, then the formula above also holds
	for $a\vert b$.
\end{rem}
Throughout the rest of this paper, let $F(x)$ be the function as defined in Proposition 2.2.

\begin{prop} The function $F$ satisfies the following property:
\\
 (a) $F(x+1)=F(x)=F(1-x)$. \\
 (b) For $0<x<1$, if we define $G(x)$ to be 
\[
F(x)=\frac{1}{x}+\frac{1}{1-x}-1+G(x)
\]
then $|G(x)|=O(x(1-x))$. \end{prop}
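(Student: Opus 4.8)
The plan is to handle the two parts separately, working throughout with the absolutely convergent representation $F(x) = \frac{1}{|x|} + \sum_{n\neq 0}\left(\frac{1}{|n-x|} - \frac{1}{|n|}\right)$, whose terms are $O(|x|/n^{2})$ by the estimate already recorded in the proof of Proposition 2.2 (so that all the rearrangements below are legitimate).

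For part (a), evenness $F(-x)=F(x)$ is immediate: the substitution $n\mapsto -n$ fixes the sum over $n\neq0$ and leaves the isolated $1/|x|$ unchanged. For the periodicity $F(x+1)=F(x)$ I would pass to the symmetric truncation
\[
F(x)=\lim_{N\to\infty}\left(\sum_{|n|\le N}\frac{1}{|n-x|}-2H_{N}\right),\qquad H_{N}:=\sum_{k=1}^{N}\frac{1}{k},
\]
where the $n=0$ term supplies the $1/|x|$. Writing $F(x+1)$ the same way and reindexing $m=n-1$ turns $\sum_{|n|\le N}\frac{1}{|n-x-1|}$ into $\sum_{-N-1\le m\le N-1}\frac{1}{|m-x|}$, which differs from the symmetric sum only by the two boundary terms $\frac{1}{|N+1+x|}-\frac{1}{|N-x|}=O(1/N)$; these vanish in the limit and give $F(x+1)=F(x)$. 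The reflection identity is then formal: $F(1-x)=F(-(x-1))=F(x-1)=F(x)$.

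For part (b), on $0<x<1$ I would isolate the only two terms that are singular on $[0,1]$, namely $n=0$ (contributing $1/x$) and $n=1$ (contributing $\frac{1}{1-x}-1$), so that by definition $G(x)=\sum_{n\neq0,1}\left(\frac{1}{|n-x|}-\frac{1}{|n|}\right)$. Splitting off the indices $n\le-1$ and pairing each $n\ge2$ with $-n$ collapses this to the closed form
\[
G(x)=-\frac{x}{1+x}+\sum_{n\ge2}\frac{2x^{2}}{n(n^{2}-x^{2})}.
\]
A direct estimate on $(0,1/2]$ now gives $|G(x)|=O(x)$: the first term is at most $x$ in absolute value, and since $n^{2}-x^{2}\ge\frac{3}{4}n^{2}$ there the tail is $O(x^{2})$.

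The step I expect to be the crux is that this naive bound $O(x)$ does not by itself yield the claimed $O(x(1-x))$, since it does not detect the vanishing of $G$ at the right endpoint. This is exactly where part (a) enters: because $\frac{1}{x}+\frac{1}{1-x}-1$ is invariant under $x\mapsto 1-x$, the identity $F(x)=F(1-x)$ forces $G(x)=G(1-x)$, so the bound $O(x)$ on $(0,1/2]$ reflects to $O(1-x)$ on $[1/2,1)$. Finally, on $(0,1/2]$ one has $x(1-x)\ge x/2$, whence $O(x)=O(x(1-x))$ there, and symmetrically on $[1/2,1)$; combining the two ranges yields $|G(x)|=O(x(1-x))$ uniformly.
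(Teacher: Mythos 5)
Your proof is correct, and for part (b) it lands on exactly the paper's closed form $G(x)=-\frac{x}{1+x}+\sum_{n\ge2}\frac{2x^{2}}{n(n^{2}-x^{2})}$ and, like the paper, exploits the symmetry $G(x)=G(1-x)$; the difference is in how each argument finishes. The paper observes that $G$ extends to a well-defined function on $(-1/2,3/2)$ with $G(0)=0$, uses symmetry to get $G(1)=0$, and asserts that $|G(x)|=O(x(1-x))$ ``follows'' --- a step that tacitly requires some regularity of $G$ (e.g.\ that the locally uniformly convergent series defines an analytic, hence Lipschitz, function near the endpoints). You instead prove the explicit quantitative bound $|G(x)|=O(x)$ on $(0,1/2]$ (first term at most $x$, tail $O(x^{2})$ via $n^{2}-x^{2}\ge\frac{3}{4}n^{2}$) and reflect it across $x=1/2$ by symmetry, which is self-contained and dodges any appeal to smoothness. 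Your part (a) is also more substantive than the paper's one-line ``arrange the terms appropriately'': the symmetric-truncation argument with $O(1/N)$ boundary terms is genuinely needed for the shift $x\mapsto x+1$, because the reindexed sum cannot be split term-by-term (the subtracted harmonic series diverges), and your reduction $F(1-x)=F(x-1)=F(x)$ via evenness plus periodicity is clean. In short: same decomposition and same key symmetry, but your endgame is more elementary and complete, while the paper's is terser at the cost of an implicit regularity claim.
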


\begin{proof} (a) It can be checked easily by arranging the terms
appropriately. \\
 (b) For $0<x<1$ we have 
\begin{align*}
F(x) & =\frac{1}{x}+\frac{1}{1-x}+\frac{1}{1+x}+\sum_{n\ge2}(\frac{1}{n-x}+\frac{1}{n+x}-\frac{2}{n})\\
 & =\frac{1}{x}+\frac{1}{1-x}-1+\sum_{n\ge2}\frac{2x^{2}}{n(n^{2}-x^{2})}-\frac{x}{1+x}.
\end{align*}
So it is immediate that 
\[
G(x)=\sum_{n\ge2}\frac{2x^{2}}{n(n^{2}-x^{2})}-\frac{x}{1+x}
\]
can be extended to well-defined function on $(-1/2,3/2)$ and $G(0)=0$.
Also, as we subtracted symmetric terms, $G(x)$ too satisfies $G(x)=G(1-x)$. Hence $G(1)=0$, and $|G(x)|=O(x(1-x))$ follows. \end{proof}

\begin{rem} On $0<x<1$, The function $F$ can be represented using
the digamma function $\psi(z)=\Gamma'(z)/\Gamma(z)$ as 
\[
F(x)=-2\gamma+\psi(1+x)+\psi(1-x)+\frac{1}{x}.
\]
\end{rem}

\begin{prop} We have 
\[
\sum_{0<a<d}F(\frac{a}{d})=2d\log d.
\]
\end{prop}

\begin{proof} We consider 
\[
\sum_{|n|<X,n\neq0}\frac{1}{|n|}=\sum_{0\le a<d}\sum_{|n|<X,n\equiv a(d)}\frac{1}{|n|}.
\]
By Proposition 2.2, the left hand side becomes $2\log X+2\gamma+O(1/X)$ and the right hand side becomes 
\begin{align*}
 & \sum_{0\le a<d}\left(\frac{2\log X-2\log d+2\gamma}{d}+\frac{1}{d}F(\frac{a}{d})+O(\frac{1}{X})\right)\\
 & =2\log X+2\gamma+\frac{1}{d}\sum_{0<a<d}F(\frac{a}{d})-2\log d+O(\frac{d}{X}).
\end{align*}
This proves our claim. \end{proof}

\subsection{Other useful lemmas}

\begin{lem} (\cite{key-8}) For $\zeta_{f}=e^{2\pi i/f}$, 
\[
\sum_{1\le a\le f}\zeta_{f}^{at}=\begin{cases}
f-1 & \text{if }f\vert t,\\
-1 & \text{otherwise},
\end{cases}
\]and
\[
\sum_{(a,f)=1}\zeta_{f}^{at}=\sum_{d\mid\gcd(t,\,f)}\mu\left(\frac{f}{d}\right)d=\frac{\phi(f)\mu(f/(f,t))}{\phi(f/(f,t))}.
\]
\end{lem}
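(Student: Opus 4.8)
The plan is to handle the two displayed sums in turn: the first is a geometric series, and the second reduces to the first by detecting coprimality with the Möbius function.

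For the first identity I would evaluate the complete geometric sum $\sum_{a=1}^{f}\zeta_f^{at}$ (equivalently $\sum_{a=0}^{f-1}$, since the terms $a=f$ and $a=0$ coincide). If $f\mid t$ then $\zeta_f^{t}=1$ and every summand equals $1$, so the sum is $f$; otherwise $\zeta_f^{t}\neq 1$ and, because $\zeta_f^{ft}=1$, the closed form $\sum_{a=0}^{f-1}\zeta_f^{at}=(\zeta_f^{ft}-1)/(\zeta_f^{t}-1)=0$ applies. Isolating the principal term $a\equiv 0\pmod f$, which always contributes $1$, converts these values into the stated $f-1$ and $-1$ for the sum over the nonzero residues.

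For the second identity I would insert the coprimality detector $\sum_{e\mid\gcd(a,f)}\mu(e)$, which equals $1$ when $\gcd(a,f)=1$ and $0$ otherwise, and interchange the order of summation:
\[
\sum_{(a,f)=1}\zeta_f^{at}=\sum_{e\mid f}\mu(e)\sum_{\substack{1\le a\le f\\ e\mid a}}\zeta_f^{at}.
\]
Substituting $a=eb$ and using $\zeta_f^{e}=\zeta_{f/e}$ turns the inner sum into a complete sum $\sum_{b=1}^{f/e}\zeta_{f/e}^{bt}$, which by the computation above equals $f/e$ when $(f/e)\mid t$ and vanishes otherwise. Re-indexing by $d=f/e$ then collapses the outer sum to $\sum_{d\mid\gcd(f,t)}\mu(f/d)\,d$, which is the middle expression.

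Finally, to obtain the closed form I would evaluate $g\mapsto\sum_{d\mid g}\mu(f/d)\,d$ at $g=\gcd(f,t)$ by multiplicativity, treating separately the contribution of each prime power exactly dividing $f$; this is the classical evaluation of Ramanujan's sum and produces $\phi(f)\mu(f/g)/\phi(f/g)$. I do not expect a genuine obstacle, as both identities are standard; the only points requiring a little care are keeping the index ranges straight in the Möbius interchange and the multiplicative bookkeeping in the last step.
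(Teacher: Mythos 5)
Your proposal is correct. Note that the paper itself supplies no proof of this lemma at all---it is quoted directly from Montgomery--Vaughan \cite{key-8}---so there is no internal argument to compare yours against; what you have written (geometric series for the complete sum, M\"obius detection of the coprimality condition via $\sum_{e\mid\gcd(a,f)}\mu(e)$, and the prime-by-prime multiplicative evaluation of the divisor sum, i.e.\ H\"older's formula for the Ramanujan sum) is exactly the standard argument found in such references, and each step goes through as you describe. A genuine merit of your write-up is that you noticed the statement's range $1\le a\le f$ is inconsistent with the claimed values $f-1$ and $-1$ (the complete sum equals $f$ or $0$); your device of isolating the term $a\equiv 0\ (\mathrm{mod}\ f)$, which always contributes $1$, correctly identifies the intended claim as the sum over the $f-1$ nonzero residues and repairs the typo without fuss.
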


\begin{lem}(\cite{key-8}) 
\[
\sum_{d\vert f}\frac{\mu(d)}{d}=\frac{\phi(f)}{f},
\]and
\[
\sum_{d\vert f}\frac{\mu(d)\log d}{d}=-\frac{\phi(f)}{f}\sum_{p\vert f}\frac{\log p}{p-1}.
\]
\end{lem}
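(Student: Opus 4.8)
The plan is to prove both identities by exploiting the fact that $\mu$ is supported on squarefree integers, together with the factorization $\sum_{d\mid f}\mu(d)d^{-s}=\prod_{p\mid f}(1-p^{-s})$.

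For the first identity I would note that $\sum_{d\mid f}\mu(d)/d$ is a multiplicative function of $f$, being a divisor sum of the multiplicative function $d\mapsto\mu(d)/d$. Evaluating it on a prime power $p^{k}$, only the divisors $d=1$ and $d=p$ contribute, giving $1-\frac{1}{p}=\frac{\phi(p^{k})}{p^{k}}$. Since $\phi(f)/f$ is also multiplicative and agrees on prime powers, the two coincide for all $f$. Equivalently, one simply expands $\prod_{p\mid f}\left(1-\frac{1}{p}\right)$ and recognizes the result as $\sum_{d\mid f}\mu(d)/d$.

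For the second identity, the key step is to use $\log d=\sum_{p\mid d}\log p$ for squarefree $d$ and interchange the order of summation:
\[
\sum_{d\mid f}\frac{\mu(d)\log d}{d}=\sum_{p\mid f}\log p\sum_{\substack{d\mid f\\ p\mid d}}\frac{\mu(d)}{d}.
\]
Writing each squarefree $d$ divisible by $p$ as $d=pe$ with $p\nmid e$ and using $\mu(pe)=-\mu(e)$, the inner sum factors as $-\frac{1}{p}\sum_{e\mid f,\,p\nmid e}\frac{\mu(e)}{e}$. Applying the first identity to the modulus with its $p$-part removed evaluates this as $-\frac{1}{p-1}\cdot\frac{\phi(f)}{f}$, and summing over $p\mid f$ gives the claimed formula.

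I expect no serious obstacle, as both are standard arithmetic identities (the lemma is quoted from \cite{key-8}); the only care required is the bookkeeping in the inner sum of the second identity—specifically, checking that after factoring out $p$ the remaining sum runs over squarefree divisors coprime to $p$, so that the first identity applies and yields $\prod_{q\mid f,\,q\neq p}\left(1-\frac{1}{q}\right)=\frac{\phi(f)}{f}\cdot\frac{p}{p-1}$. A slicker alternative, which I would also mention, is to set $g(s)=\sum_{d\mid f}\mu(d)d^{-s}=\prod_{p\mid f}(1-p^{-s})$, so that $\sum_{d\mid f}\frac{\mu(d)\log d}{d}=-g'(1)$, and then read off $g'(1)=g(1)\sum_{p\mid f}\frac{\log p}{p-1}$ by logarithmic differentiation of the product, using $g(1)=\phi(f)/f$ from the first identity.
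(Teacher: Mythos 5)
Your proposal is correct on both counts, but note that there is no proof in the paper to compare it against: the lemma is simply quoted from Montgomery--Vaughan \cite{key-8}, so your write-up supplies an argument the authors omit. Both of your routes are sound. The first identity via multiplicativity is fine: on a prime power $p^{k}$ only $d=1$ and $d=p$ contribute, giving $1-\frac{1}{p}=\phi(p^{k})/p^{k}$, and both sides are multiplicative in $f$. For the second identity, your interchange $\sum_{d\mid f}\frac{\mu(d)\log d}{d}=\sum_{p\mid f}\log p\sum_{d\mid f,\,p\mid d}\frac{\mu(d)}{d}$ is legitimate because $\mu$ kills the non-squarefree $d$, where $\log d=\sum_{p\mid d}\log p$ could otherwise fail; and the bookkeeping you flag is exactly the right point to check: the divisors of $f$ coprime to $p$ are precisely the divisors of the $p$-free part of $f$, so the first identity gives $\prod_{q\mid f,\,q\neq p}\left(1-\frac{1}{q}\right)=\frac{\phi(f)}{f}\cdot\frac{p}{p-1}$, whence the inner sum equals $-\frac{1}{p}\cdot\frac{\phi(f)}{f}\cdot\frac{p}{p-1}=-\frac{\phi(f)}{f(p-1)}$ and the claimed formula follows. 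Your alternative via $g(s)=\sum_{d\mid f}\mu(d)d^{-s}=\prod_{p\mid f}\left(1-p^{-s}\right)$, reading the result off as $-g'(1)=-g(1)\sum_{p\mid f}\frac{\log p}{p-1}$ by logarithmic differentiation, is equally valid and is arguably the cleanest way to package both identities at once.
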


\begin{lem} \label{logexp} For $|z|=1$ 
\[
\log|1-z|=\sum_{n\neq0}\frac{z^{n}}{2|n|}=\sum_{|n|<X,n\neq0}\frac{z^{n}}{2|n|}+O(\frac{1}{|1-z|X}).
\]
\end{lem}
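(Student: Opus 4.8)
The plan is to split the statement into an exact series identity and a truncation estimate. For the identity I would begin from the Mercator series
\[
-\log(1-w)=\sum_{n\ge1}\frac{w^{n}}{n},
\]
valid for $|w|<1$. Since the partial sums of $\sum z^{n}$ are bounded (see below), Dirichlet's test shows the series converges at every $z$ on the unit circle with $z\neq1$, and Abel's theorem identifies the sum with $-\log(1-z)$ there, as $-\log(1-w)$ is continuous up to the boundary away from $w=1$. Taking real parts via $\log|1-z|=\operatorname{Re}\log(1-z)$ and using $\overline{z^{n}}=z^{-n}$ (which holds precisely because $|z|=1$) symmetrizes the one-sided Mercator sum into the two-sided sum $\sum_{n\neq0}z^{n}/(2|n|)$. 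I would track the overall sign with care: real parts of $-\sum_{n\ge1}z^{n}/n$ give $-\sum_{n\neq0}z^{n}/(2|n|)$, so I would sanity-check the sign of the displayed identity on the test value $z=-1$, where $\log|1-z|=\log2$, before proceeding.

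For the truncation, it suffices to bound the tail $\sum_{|n|\ge X}z^{n}/(2|n|)$ by $O(1/(|1-z|X))$, and the natural tool is summation by parts. The input is the uniform Dirichlet bound
\[
\Bigl|\sum_{1\le n\le N}z^{n}\Bigr|=\frac{|z|\,|1-z^{N}|}{|1-z|}\le\frac{2}{|1-z|},
\]
valid for all $N$. Feeding this into Abel summation against the monotone sequence $1/n\to0$, the positive-index tail $\sum_{n>M}z^{n}/n$ is bounded by a constant times $1/(|1-z|\,M)$, the telescoping of the differences $1/n-1/(n+1)$ supplying the factor $1/M$. The negative-index tail is handled identically with $z$ replaced by $z^{-1}$, noting $|1-z^{-1}|=|1-z|$ for $|z|=1$; taking $M$ of order $X$ gives the claimed error.

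The main obstacle, and the only place requiring attention, is uniformity as $z\to1$. The explicit factor $1/|1-z|$ in the error is unavoidable: it is exactly the size of the partial sums of $z^{n}$, and it measures how slowly the truncated series converges near the singularity of $\log(1-z)$. The delicate step is therefore to carry this $1/|1-z|$ through the summation by parts explicitly, keeping it outside the $O$-constant, so that the final bound is honest and uniform over the circle away from $z=1$. Everything else reduces to the routine convergence and telescoping estimates above.
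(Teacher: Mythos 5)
Your proof is correct, and in fact the paper offers no proof of this lemma at all --- it is stated bare in the subsection on useful lemmas, alongside two facts cited to Montgomery--Vaughan --- so your argument (Mercator series plus Abel's theorem for the identity, partial summation against the uniform bound $|\sum_{n\le N}z^{n}|\le 2/|1-z|$ for the truncation) supplies exactly the standard details the authors omitted. More importantly, your sign-tracking catches a genuine error in the statement as printed: taking real parts of $\log(1-z)=-\sum_{n\ge1}z^{n}/n$ gives
\[
\log|1-z|=-\sum_{n\neq0}\frac{z^{n}}{2|n|},
\]
and your test value $z=-1$ confirms the discrepancy, since the left side is $\log 2$ while the displayed series equals $\sum_{n\ge1}(-1)^{n}/n=-\log 2$. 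So the lemma should carry a minus sign. This does not damage the paper: the lemma is invoked only once, in Section 4, to expand the product $\log|1-\zeta_{d}^{k}|\,\log|1-\zeta_{d}^{ck}|$ inside $R(c,d)$, and there the two sign errors cancel --- which is presumably why it went unnoticed. Your truncation analysis is also sound: summation by parts of the tail $\sum_{n>M}z^{n}/n$ against the telescoping differences $1/n-1/(n+1)$ gives $O(1/(|1-z|M))$ with the factor $1/|1-z|$ kept explicit, and the negative-index tail follows from $|1-z^{-1}|=|1-z|$ on the unit circle. The one simplification available is that the truncation bound already implies convergence of the two-sided series, so the separate Dirichlet-test/Abel step for convergence can be folded into the same partial-summation computation; only the identification of the limit with $\log|1-z|$ needs Abel's theorem.
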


\section{An exact formula for $M_{+}(f,\,c):$ Proof of Theorem \ref{thm:explicit+}}

In this section, we want to calculate the explicit formula for $M_{+}\left(f,c\right)$. We start from the following proposition.
\begin{prop}
	\label{prop:3.1}
	\[
	M_{+}\left(f,c\right)=\frac{\phi(f)}{f^{2}}\sum_{d\mid f}\mu\left(\frac{f}{d}\right)d\sum_{\substack{\substack{j,k=1\\
				d\mid j-ck
			}
		}
	}^{f-1}\log\left(2\sin\frac{\pi j}{f}\right)\log\left(2\sin\frac{\pi k}{f}\right)-\left(\frac{\phi(f)}{f}\sum_{p\vert f}\frac{\log p}{p-1}\right)^{2}.
	\]
\end{prop}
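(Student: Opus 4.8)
The plan is to substitute the even-character evaluation of Lemma 2.1 into the definition of $M_{+}(f,c)$ and then collapse the character sum by orthogonality. Write $g(a):=\sum_{j=1}^{f-1}\zeta_{f}^{aj}\log(2\sin(\pi j/f))$, so that Lemma 2.1 gives $L(1,\chi)=-\frac{1}{f}\sum_{a=1}^{f-1}\chi(a)g(a)$ for even nontrivial $\chi$; the reflection $j\mapsto f-j$ fixes the logarithm and sends $\zeta_f^{aj}\mapsto\zeta_f^{-aj}$, so in fact $g(a)\in\mathbb{R}$. Expanding $|L(1,\chi)|^{2}$ as a double sum and inserting the twist $\chi(c)$ yields
\[
M_{+}(f,c)=\frac{1}{f^{2}}\sum_{a,b}g(a)g(b)\sum_{\substack{\chi\in X_{f}^{+}\\ \chi\neq\chi_{0}}}\chi(ca)\overline{\chi(b)},
\]
where $a,b$ effectively range over residues coprime to $f$ since $\chi$ vanishes otherwise.

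Next I would evaluate the inner character sum. Using $\mathbf{1}_{X_{f}^{+}}(\chi)=\tfrac12(1+\chi(-1))$ together with standard orthogonality, one obtains for $\gcd(m,f)=1$ that $\sum_{\chi\in X_{f}^{+}}\chi(m)=\frac{\phi(f)}{2}\bigl(\mathbf{1}_{m\equiv1}+\mathbf{1}_{m\equiv-1\,(f)}\bigr)$, hence
\[
\sum_{\substack{\chi\in X_{f}^{+}\\ \chi\neq\chi_{0}}}\chi(m)=\frac{\phi(f)}{2}\bigl(\mathbf{1}_{m\equiv1}+\mathbf{1}_{m\equiv-1\,(f)}\bigr)-1,
\]
applied with $m\equiv ca\,b^{-1}\pmod f$. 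This decomposes $M_{+}(f,c)$ into three pieces coming from $\mathbf{1}_{ca\equiv b}$, from $\mathbf{1}_{ca\equiv-b}$, and from the $-1$.

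The two indicator pieces each equal $\frac{\phi(f)}{2f^{2}}\sum_{(a,f)=1}\sum_{j,k}\zeta_{f}^{a(j-ck)}\log(2\sin\tfrac{\pi j}{f})\log(2\sin\tfrac{\pi k}{f})$: the first directly upon setting $b\equiv ca$, and the second after the substitution $k\mapsto f-k$, which preserves the logarithm and turns the exponent $a(j+ck)$ into $a(j-ck)$. Adding them and rewriting $\sum_{(a,f)=1}\zeta_{f}^{a(j-ck)}=\sum_{d\mid\gcd(j-ck,f)}\mu(f/d)d$ by Lemma 2.3, then interchanging the order of summation so that $d$ runs over all divisors of $f$ subject to $d\mid j-ck$, reproduces the main term $\frac{\phi(f)}{f^{2}}\sum_{d\mid f}\mu(f/d)d\sum_{d\mid j-ck}\log(2\sin\tfrac{\pi j}{f})\log(2\sin\tfrac{\pi k}{f})$.

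Finally, the $-1$ piece equals $-\frac{1}{f^{2}}\bigl(\sum_{(a,f)=1}g(a)\bigr)^{2}$. Here I would compute $\sum_{(a,f)=1}g(a)=\sum_{d\mid f}\mu(f/d)d\sum_{d\mid j}\log(2\sin\tfrac{\pi j}{f})$ via Lemma 2.3; the classical identity $\prod_{\ell=1}^{n-1}2\sin(\pi\ell/n)=n$ collapses the inner sum to $\log(f/d)$, and Lemma 2.4 then gives $\sum_{(a,f)=1}g(a)=-\phi(f)\sum_{p\mid f}\frac{\log p}{p-1}$, whose square is exactly the subtracted second term. The main obstacle is the bookkeeping of the even-character orthogonality: one must recognize that the two indicator functions contribute equally through the reflection $k\mapsto f-k$, and that removing the trivial character is precisely what supplies the second term; everything else is a routine application of the stated lemmas.
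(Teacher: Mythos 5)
Your proposal is correct and follows essentially the same route as the paper's proof: expand $|L(1,\chi)|^{2}$ via Lemma 2.1, apply even-character orthogonality, merge the $ca\equiv b$ and $ca\equiv-b$ contributions through the reflection $k\mapsto f-k$, convert $\sum_{(a,f)=1}\zeta_{f}^{a(j-ck)}$ into a divisor sum, and evaluate the trivial-character contribution using $\prod_{\ell=1}^{n-1}2\sin(\pi\ell/n)=n$ together with the M\"obius--logarithm identity. The only cosmetic differences are that you absorb the removal of $\chi_{0}$ into the orthogonality relation as the extra $-1$ term, whereas the paper subtracts the $\chi_{0}$ term at the outset, and that the identities you cite as Lemmas 2.3 and 2.4 are Lemmas 2.5 and 2.6 in the paper's numbering.
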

\begin{proof}
First, recall Lemma 2.1, 
\[
L\left(1,\,\chi\right)=-\frac{1}{f}\sum_{a=1}^{f-1}\chi(a)\sum_{j=1}^{f-1}\zeta_{f}^{aj}\log\left(2\sin\frac{\pi j}{f}\right)
\]
for $\chi$ even. This gives \[
\begin{aligned}
 \sum_{\substack{\chi\in X_{f}^{+}\\
\chi\neq1
}
}\chi\left(c\right)\left|L\left(1,\,\chi\right)\right|^{2}= &\sum_{\chi\in X_{f}^{+}}\chi(c)\left|\frac{1}{f}\sum_{a=1}^{f-1}\chi(a)\sum_{j=1}^{f-1}\zeta_{f}^{aj}\log\left(2\sin\frac{\pi j}{f}\right)\right|^{2} \\
&-\chi_{0}\left(c\right)\left|\frac{1}{f}\sum_{a=1}^{f-1}\chi_{0}(a)\sum_{j=1}^{f-1}\zeta_{f}^{aj}\log\left(2\sin\frac{\pi j}{f}\right)\right|^{2}.
\end{aligned}\]
To calculate the value for $\chi_{0}$, note that 
\begin{align*}
\sum_{j=1}^{m-1}\log\left(2\sin\frac{\pi j}{m}\right) & =\sum_{j=1}^{m-1}\log\left|1-\zeta_{m}^{j}\right|\\
 & =\log m.
\end{align*}
This with Lemma 2.5 implies 
\[
\begin{aligned}\frac{1}{f}\sum_{a=1}^{f-1}\chi_{0}(a)\sum_{j=1}^{f-1}\zeta_{f}^{aj}\log\left(2\sin\frac{\pi j}{f}\right) & =\frac{1}{f}\sum_{j=1}^{f-1}\log(2\sin\frac{\pi j}{f})\sum_{(a,f)=1}\zeta_{f}^{aj}\\
 & =\frac{1}{f}\sum_{j=1}^{f-1}\log(2\sin\frac{\pi j}{f})\sum_{d\mid f,\,j}\mu\left(\frac{f}{d}\right)d\\
 & =\frac{1}{f}\sum_{d\mid f}\mu\left(\frac{f}{d}\right)d\sum_{\substack{0<j<f\\
d\mid j
}
}\log\left(2\sin\frac{\pi j}{f}\right)\\
 & =\frac{1}{f}\sum_{d\mid f}\mu\left(\frac{f}{d}\right)d\log\left(\frac{f}{d}\right)\\
 & =-\frac{\phi(f)}{f}\sum_{p\vert f}\frac{\log p}{p-1}.
\end{aligned}
\]
So we have 
\begin{align*}M_{+}\left(f,c\right) & =\sum_{\chi\in X_{f}^{+}}\chi(c)\left|\frac{1}{f}\sum_{a=1}^{f-1}\chi(a)\sum_{j=1}^{f-1}\zeta_{f}^{aj}\log\left(2\sin\frac{\pi j}{f}\right)\right|^{2}-\left(\frac{\phi(f)}{f}\sum_{p\vert f}\frac{\log p}{p-1}\right)^{2}.\end{align*}

Now, we can write
\begin{align*} & \sum_{\chi\in X_{f}^{+}}\chi(c)\left|\frac{1}{f}\sum_{a=1}^{f-1}\chi(a)\sum_{j=1}^{f-1}\zeta_{f}^{aj}\log\left(2\sin\frac{\pi j}{f}\right)\right|^{2}\\
 & =\sum_{\chi\in X_{f}^{+}}\chi(c)\left(\frac{1}{f}\sum_{a=1}^{f-1}\chi(a)\sum_{j=1}^{f-1}\zeta_{f}^{aj}\log\left(2\sin\frac{\pi j}{f}\right)\right)\cdot\left(\frac{1}{f}\sum_{b=1}^{f-1}\overline{\chi(b)}\sum_{k=1}^{f-1}\zeta_{f}^{-bk}\log\left(2\sin\frac{\pi k}{f}\right)\right)\\
 & =\frac{1}{f^{2}}\sum_{\substack{a,\,b,\,j,\,k=1\\
(a,f)=(b,f)=1
}
}^{f-1}\left(\sum_{\chi\in X_{f}^{+}}\chi(a)\overline{\chi(b)}\chi(c)\right)\zeta_{f}^{aj-bk}\log\left(2\sin\frac{\pi j}{f}\right)\log\left(2\sin\frac{\pi k}{f}\right)\\
&=\frac{\phi(f)}{2f^{2}}\sum_{\substack{a,\,j,\,k=1\\
		(a,f)=1
	}
}^{f-1}\left(\zeta_{f}^{aj-ack}+\zeta_{f}^{aj+ack}\right)\log\left(2\sin\frac{\pi j}{f}\right)\log\left(2\sin\frac{\pi k}{f}\right)
\end{align*}

as 
\[\sum_{\chi\in X_{f}^{+}}\chi(a)\overline{\chi(b)}\chi(c)=\begin{cases}
\frac{\phi(f)}{2} & \text{if}\;ac\equiv\pm b\;\left(\text{mod}\;f\right),\\
0 & \text{otherwise.}
\end{cases}\]

One can note that if we replace $k$ by $f-k$, we observe
\[\begin{aligned} & 
\sum_{\substack{a,\,j,\,k=1\\
(a,f)=1
}
}^{f-1}\left(\zeta_{f}^{aj+ack}\right)\log\left(2\sin\frac{\pi j}{f}\right)\log\left(2\sin\frac{\pi k}{f}\right)\\
&
=
\sum_{\substack{a,\,j,\,k=1\\
(a,f)=1
}
}^{f-1}\left(\zeta_{f}^{aj-ack}\right)\log\left(2\sin\frac{\pi j}{f}\right)\log\left(2\sin\frac{\pi k}{f}\right).
\end{aligned}
\]
Hence we only need to consider the parts involving $\zeta_{f}^{aj-ack}$. This allows us to write 
\[
\begin{aligned}
 &\frac{\phi(f)}{2f^{2}}\sum_{\substack{a,\,j,\,k=1\\
(a,f)=1
}
}^{f-1}\left(\zeta_{f}^{aj-ack}+\zeta_{f}^{aj+ack}\right)\log\left(2\sin\frac{\pi j}{f}\right)\log\left(2\sin\frac{\pi k}{f}\right)\\
 & =\frac{\phi(f)}{f^{2}}\sum_{\substack{j,\,k=1}
}^{f-1}\log\left(2\sin\frac{\pi j}{f}\right)\log\left(2\sin\frac{\pi k}{f}\right)\sum_{\substack{(a,f)=1}
}^{f-1}\left(\zeta_{f}^{a(j-ck)}\right)\\
 & =\frac{\phi(f)}{f^{2}}\sum_{\substack{j,\,k=1}
}^{f-1}\log\left(2\sin\frac{\pi j}{f}\right)\log\left(2\sin\frac{\pi k}{f}\right)\left(\sum_{d\mid f,\,j-ck}\mu\left(\frac{f}{d}\right)d\right)\\
 & =\frac{\phi(f)}{f^{2}}\sum_{d\mid f}\mu\left(\frac{f}{d}\right)d\sum_{\substack{\substack{j,k=1\\
d\mid j-ck
}
}
}^{f-1}\log\left(2\sin\frac{\pi j}{f}\right)\log\left(2\sin\frac{\pi k}{f}\right).
\end{aligned}
\]
Thus we get 
\[
M_{+}\left(f,c\right)=\frac{\phi(f)}{f^{2}}\sum_{d\mid f}\mu\left(\frac{f}{d}\right)d\sum_{\substack{\substack{j,k=1\\
d\mid j-ck
}
}
}^{f-1}\log\left(2\sin\frac{\pi j}{f}\right)\log\left(2\sin\frac{\pi k}{f}\right)-\left(\frac{\phi(f)}{f}\sum_{p\vert f}\frac{\log p}{p-1}\right)^{2}.
\]
as required. 	
\end{proof}
Next, we calculate 
\[
\sum_{\substack{j,k=1\\
d\mid j-ck
}
}^{f-1}\log\left(2\sin\frac{\pi j}{f}\right)\log\left(2\sin\frac{\pi k}{f}\right).
\]
\begin{prop}\label{prop:3.2} For $d$ and $f$ such that $d\mid f$,
\[
	\begin{alignedat}{1} & \sum_{\substack{j,k=1\\
			d\mid j-ck
		}
	}^{f-1}\log\left(2\sin\frac{\pi j}{f}\right)\log\left(2\sin\frac{\pi k}{f}\right)\\
	& =\log\left(\frac{f}{d}\right)^{2}+\sum_{1\leq u\leq d-1}\log\left(2\sin\frac{\pi cu}{d}\right)\log\left(2\sin\frac{\pi u}{d}\right).
	\end{alignedat}
	\]
\end{prop}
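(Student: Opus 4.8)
The plan is to decouple the two summation variables by performing the $j$-sum first for each fixed $k$, exploiting that the constraint $d\mid j-ck$ pins $j$ down only through the residue $ck\bmod d$. Throughout write $f=dm$ with $m=f/d$, and record that $\gcd(c,d)=1$ (since $\gcd(c,f)=1$ and $d\mid f$); this is what guarantees that $d\nmid k$ forces $d\nmid ck$, keeping the residue bookkeeping clean.

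The computational heart is an arithmetic-progression product identity for roots of unity. For a fixed residue $r$ with $0<r<d$, the integers $j\in[1,f-1]$ with $j\equiv r\ (\mathrm{mod}\,d)$ are exactly $j=r+td$, $t=0,\dots,m-1$, and I would evaluate
\[
\prod_{t=0}^{m-1}\left(2\sin\frac{\pi(r+td)}{f}\right)=\prod_{t=0}^{m-1}\left|1-\zeta_f^{r+td}\right|
\]
by specializing $\prod_{t=0}^{m-1}(X-\zeta_f^{r}\zeta_m^{t})=X^{m}-\zeta_f^{rm}=X^{m}-\zeta_d^{r}$ at $X=1$, where $\zeta_m=\zeta_f^{d}$. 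Taking absolute values turns the left side into $\left|1-\zeta_d^{r}\right|=2\sin(\pi r/d)$, so that
\[
\sum_{\substack{j=1\\ j\equiv r\ (\mathrm{mod}\,d)}}^{f-1}\log\left(2\sin\frac{\pi j}{f}\right)=\log\left(2\sin\frac{\pi r}{d}\right),\qquad 0<r<d.
\]
For the remaining residue $r=0$ the progression is $j=d,2d,\dots,(m-1)d$ (with $j=0$ omitted), and the same computation collapses via $2\sin(\pi td/f)=2\sin(\pi t/m)$ and the classical identity $\prod_{t=1}^{m-1}2\sin(\pi t/m)=m$ to the value $\log(f/d)$.

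With these inner sums in hand, I would rewrite the double sum as $\sum_{k=1}^{f-1}\log\!\left(2\sin\frac{\pi k}{f}\right)B_k$, where $B_k$ is the inner $j$-sum above and depends on $k$ only through $ck\bmod d$. Splitting the $k$-sum according to whether $d\mid k$: the terms with $d\mid k$ carry $B_k=\log(f/d)$ and, since $\sum_{d\mid k,\,1\le k\le f-1}\log(2\sin(\pi k/f))=\log(f/d)$ by the same classical identity, contribute the leading $\log(f/d)^{2}$. For $d\nmid k$ I would group the $k$ by residue $s$ with $1\le s\le d-1$, note $B_k=\log(2\sin(\pi(cs\bmod d)/d))$ is constant on each class, and use $\sum_{k\equiv s}\log(2\sin(\pi k/f))=\log(2\sin(\pi s/d))$ from the displayed formula above. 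Summing over $s$ produces $\sum_{s=1}^{d-1}\log(2\sin(\pi s/d))\log(2\sin(\pi(cs\bmod d)/d))$, which is the asserted $R$-type term after identifying $2\sin(\pi(cs\bmod d)/d)=|1-\zeta_d^{cs}|=2|\sin(\pi cs/d)|$.

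The only delicate points are the $r=0$ boundary case, where one entry of the progression drops out and the normalization shifts from $2\sin(\pi r/d)$ to $\log(f/d)$, and the identification of the residue $cs\bmod d$ with $|\sin(\pi cs/d)|$ so that the final sum lines up with the stated $2\sin(\pi cu/d)$ factors. Everything else is routine resummation; I expect the arithmetic-progression product identity to be the main structural step rather than a genuine obstacle.
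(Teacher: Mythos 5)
Your proposal is correct and follows essentially the same route as the paper: both evaluate the inner $j$-sum for fixed $k$ via the arithmetic-progression product identity $\prod_{t=0}^{m-1}(X-\zeta_f^{r}\zeta_m^{t})=X^m-\zeta_d^{r}$ (giving $\log(f/d)$ when $d\mid k$ and $\log(2\sin(\pi ck/d))$ otherwise), then split the outer $k$-sum by residue class mod $d$ and apply the same identity again. Your explicit attention to the reduction $ck\bmod d$ and the absolute-value convention $|1-\zeta_d^{ck}|=2|\sin(\pi ck/d)|$ is slightly more careful than the paper's phrasing, but it is the same argument.
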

\begin{proof}
For $k$ fixed and $d\mid k,$ the condition $d\mid j-ck$ is satisfied
for $j=d,\,2d,\,\ldots,\,\left(\frac{f}{d}-1\right)d$. So 
\begin{equation}
\begin{alignedat}{1}\sum_{\substack{1\leq j\leq f-1\\
d\mid j-ck
}
}\log\left(2\sin\frac{\pi j}{f}\right) & =\sum_{\substack{1\leq j\leq f-1\\
d\mid j
}
}\log\left|1-\zeta_{f}^{j}\right|\\
 & =\log\left|\left(1-\zeta_{f}^{d}\right)\left(1-\zeta_{f}^{2d}\right)\cdots\left(1-\zeta_{f}^{f-d}\right)\right|\\
 & =\log\left(\frac{f}{d}\right).
\end{alignedat}
\end{equation}
For $d\nmid k,$ the condition $d\mid j-ck$ is satisfied by $\frac{f}{d}$
values of $j,$ and they can be chosen to be $ck,$ $ck+d,\,\ldots\,,\,ck+\left(\frac{f}{d}-1\right)d$
instead of their reduction by mod $f.$ Thus 
\[
\begin{alignedat}{1}\sum_{\substack{1\leq j\leq f-1\\
d\mid j-ck
}
}\log\left(2\sin\frac{\pi j}{f}\right) & =\sum_{\substack{1\leq j\leq f-1\\
d\mid j-ck
}
}\log\left|1-\zeta_{f}^{j}\right|\\
 & =\log\left|\left(1-\zeta_{f}^{ck}\right)\left(1-\zeta_{f}^{ck}\zeta_{f}^{d}\right)\left(1-\zeta_{f}^{ck}\zeta_{f}^{2d}\right)\cdots\left(1-\zeta_{f}^{ck}\zeta_{f}^{f-d}\right)\right|\\
 & =\log\left|1-\zeta_{f}^{ckf/d}\right|\\
 & =\log\left|1-\zeta_{d}^{ck}\right|.\\
 & =\log\left(2\sin\frac{\pi ck}{d}\right).
\end{alignedat}
\]
Combining this with $\left(3.1\right),$ we get 
\[
\begin{alignedat}{1}\sum_{\substack{j,k=1\\
d\mid j-ck
}
}^{f-1}\log\left(2\sin\frac{\pi j}{f}\right)\log\left(2\sin\frac{\pi k}{f}\right)= & \sum_{\substack{k=1\\
d\mid k
}
}^{f-1}\log\left(2\sin\frac{\pi k}{f}\right)\sum_{\substack{j=1\\
d\mid j-ck
}
}^{f-1}\log\left(2\sin\frac{\pi j}{f}\right)\\
 & +\sum_{\substack{k=1\\
d\nmid k
}
}^{f-1}\log\left(2\sin\frac{\pi k}{f}\right)\sum_{\substack{j=1\\
d\mid j-ck
}
}^{f-1}\log\left(2\sin\frac{\pi j}{f}\right)\\
= & \log\left(\frac{f}{d}\right)^{2}+\sum_{\substack{k=1\\
d\nmid k
}
}^{f-1}\log\left(2\sin\frac{\pi k}{f}\right)\log\left(2\sin\frac{\pi ck}{d}\right).
\end{alignedat}
\]
Now, for 
\[
\sum_{\substack{k=1\\
d\nmid k
}
}^{f-1}\log\left(2\sin\frac{\pi k}{f}\right)\log\left(2\sin\frac{\pi ck}{d}\right),
\]
$k$ again can be grouped according to the remainder mod $d.$ Let $k=u+dv,\;1\leq u\leq d-1,\;0\leq v\leq\frac{f}{d}-1.$
Then 
\[
\begin{alignedat}{1} & \sum_{\substack{k=1\\
d\nmid k
}
}^{f-1}\log\left(2\sin\frac{\pi k}{f}\right)\log\left(2\sin\frac{\pi ck}{d}\right)\\
 & =\sum_{1\leq u\leq d-1}\sum_{0\leq v\leq(f/d)-1}\log\left(2\sin\frac{\pi c\left(u+dv\right)}{d}\right)\log\left(2\sin\left(\frac{\pi u}{f}+\frac{\pi v}{(f/d)}\right)\right)\\
 & =\sum_{1\leq u\leq d-1}\log\left(2\sin\frac{\pi cu}{d}\right)\sum_{0\leq v\leq(f/d)-1}\log\left|1-\zeta_{f}^{u}\zeta_{f/d}^{v}\right|\\
 & =\sum_{1\leq u\leq d-1}\log\left(2\sin\frac{\pi cu}{d}\right)\log\left(2\sin\frac{\pi u}{d}\right).
\end{alignedat}
\]
Hence we get 
\[
\begin{alignedat}{1} & \sum_{\substack{j,k=1\\
		d\mid j-ck
	}
}^{f-1}\log\left(2\sin\frac{\pi j}{f}\right)\log\left(2\sin\frac{\pi k}{f}\right)\\
& =\log\left(\frac{f}{d}\right)^{2}+\sum_{1\leq u\leq d-1}\log\left(2\sin\frac{\pi cu}{d}\right)\log\left(2\sin\frac{\pi u}{d}\right).
\end{alignedat}
\]
as required.
\end{proof}
Finally, by combining Proposition \ref{prop:3.1} and \ref{prop:3.2} we get 
\[
\begin{alignedat}{1}M_{+}\left(f,c\right) & =\frac{\phi(f)}{f^{2}}\sum_{d\mid f}\mu\left(\frac{f}{d}\right)d\sum_{\substack{\substack{j,k=1\\
d\mid j-ck
}
}
}^{f-1}\log\left(2\sin\frac{\pi j}{f}\right)\log\left(2\sin\frac{\pi k}{f}\right)-\left(\frac{\phi(f)}{f}\sum_{p\vert f}\frac{\log p}{p-1}\right)^{2}\\
 & =\frac{\phi(f)}{f^{2}}\sum_{d\mid f}\mu\left(\frac{f}{d}\right)d\left(\log\left(\frac{f}{d}\right)^{2}+\sum_{1\leq u\leq d-1}\log\left(2\sin\frac{\pi cu}{d}\right)\log\left(2\sin\frac{\pi u}{d}\right)\right)\\
 & -\left(\frac{\phi(f)}{f}\sum_{p\vert f}\frac{\log p}{p-1}\right)^{2}.
\end{alignedat}
\]
This proves Theorem \ref{thm:explicit+}.

\section{An asymptotic formula for $M_{+}(f,\,c):$ Proof of Theorem \ref{thm:asymp+}}

First, we have to evaluate the kernel 
\begin{align*}
R(c,d) & =\sum_{k=1}^{d-1}\log\left(2\sin(\pi k/d))\log(2\sin(\pi ck/d)\right)\\
 & =\sum_{k=1}^{d-1}\log|1-\zeta_{d}^{k}|\log|1-\zeta_{d}^{ck}|.
\end{align*}
Using the identity of Lemma \ref{logexp}, we expand out the $\log$
inside 
\begin{align*}
R(c,d) & =\sum_{k=1}^{d-1}\left(\sum_{|m|<X_{1},m\neq0}\frac{\zeta_{d}^{mk}}{2|m|}+O(\frac{1}{X_{1}})\right)\left(\sum_{|n|<X_{2},n\neq0}\frac{\zeta_{d}^{cnk}}{2|n|}+O(\frac{1}{X_{2}})\right)\\
 & =\sum_{\substack{|m|<X_{1},|n|<X_{2}\\mn\neq0}}\;\sum_{k=1}^{d-1}\frac{\zeta_{d}^{(m+cn)k}}{4|mn|}+O\left(\frac{\log X_{1}}{X_{2}}+\frac{\log X_{2}}{X_{1}}\right).
\end{align*}
This implies 
\[
R(c,d)=\sum_{\substack{|m|<X_{1},|n|<X_{2}\\mn\neq0,\,d\vert m+cn}}\frac{d}{4|mn|}-\sum_{\substack{|m|<X_{1},|n|<X_{2}\\mn\neq0}}\frac{1}{4|mn|}+O\left(\frac{\log X_{1}}{X_{2}}+\frac{\log X_{2}}{X_{1}}\right).
\]
Now, we have
\[
\sum_{\substack{|m|<X_{1},|n|<X_{2}\\mn\neq0}}\frac{1}{4|mn|}=(\log X_{1}+\gamma)(\log X_{2}+\gamma)+O\left(\frac{\log X_{1}}{X_{2}}+\frac{\log X_{2}}{X_{1}}\right).
\]
and 
\begin{align*}
 & \sum_{\substack{|m|<X_{1},|n|<X_{2}\\mn\neq0,\,d\vert m+cn}}\frac{d}{4|mn|}\\
 & =\sum_{|n|<X_{2},n\neq0}\sum_{|m|<X_{1},m\neq0,m\equiv-cn(d)}\frac{d}{4|mn|}\\
 & =\sum_{|n|<X_{2},n\neq0}\frac{d}{4|n|}\left(\frac{2\log X_{1}-2\log d+2\gamma}{d}+\frac{1}{d}F(\frac{-cn}{d})+O(\frac{1}{X_{1}})\right)\\
 & =(\log X_{2}+\gamma)(\log X_{1}-\log d+\gamma)+\sum_{|n|<X_{2},d\nmid n}\frac{F(-cn/d)}{4|n|}+O(\frac{\log X_{2}}{X_{1}})\\
 & =(\log X_{2}+\gamma)(\log X_{1}-\log d+\gamma)+\sum_{0<e<d}F(\frac{-ce}{d})\sum_{|n|<X_{2},n\equiv e}\frac{1}{4|n|}+O(\frac{\log X_{2}}{X_{1}})\\
 & =(\log X_{2}+\gamma)(\log X_{1}-\log d+\gamma)+\sum_{0<e<d}F(\frac{-ce}{d})\left(\frac{\log X_{2}-\log d+\gamma}{2d}+\frac{F(e/d)}{4d}\right)\\
 &\quad +O(\frac{\log X_{2}}{X_{1}}+\frac{1}{X_2})\\
 & =(\log X_{1}+\gamma)(\log X_{2}+\gamma)+\frac{1}{4d}\sum_{0<e<d}F(\frac{-ce}{d})F(\frac{e}{d})-(\log d)^{2}+O(\frac{\log X_{2}}{X_{1}}+\frac{1}{X_2})
\end{align*}
as 
\[
\sum_{0<e<d}F(\frac{e}{d})=2d\log d.
\]
Combining these calculations, we get  
\[
R(c,d)=\frac{1}{4d}\sum_{0<e<d}F(\frac{-ce}{d})F(\frac{e}{d})-(\log d)^{2}+O\left(\frac{\log X_{1}+\log X_{2}}{\min(X_{1},X_{2})}\right).
\]
As $F$ is even, by sending $X_{1}$ and $X_{2}$ to infinity we
can write
\[
R(c,d)=\frac{1}{4d}\sum_{0<e<d}F(\frac{ce}{d})F(\frac{e}{d})-(\log d)^{2}.
\]
Now, we will prove the following result:

\begin{lem} Let $d>c$ coprime. 
We have 
\[
R(c,d)=\frac{\pi^{2}}{12}\frac{d}{c}-(\log d)^{2}+O(\log d)
\] with the implied constant not depending on $c$.
\end{lem}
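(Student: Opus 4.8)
The plan is to start from the identity just established,
\[
R(c,d)=\frac{1}{4d}\sum_{0<e<d}F\!\left(\tfrac{ce}{d}\right)F\!\left(\tfrac{e}{d}\right)-(\log d)^{2},
\]
and to feed in the local description of $F$ from Proposition 2.3. Writing $P(x)=\frac1x+\frac1{1-x}$ on $(0,1)$, so that $F=P-1+G$ with $|G(x)|=O(x(1-x))$, and noting $P(e/d)=\frac{d^{2}}{e(d-e)}$, I would expand the product $F(ce/d)F(e/d)$ into the nine terms coming from $(P-1+G)(P-1+G)$ and treat three groups separately: the main product $P(ce/d)P(e/d)$, the terms linear in $P$, and the remaining terms, each of which carries at least one factor $G$ or one factor $1$.

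For the main product I would isolate the contribution of small $e$. Since $\gcd(c,d)=1$, the map $e\mapsto ce\bmod d$ is a bijection of $\{1,\dots,d-1\}$, and for $1\le e<d/c$ one has $ce\bmod d=ce$ exactly, so $P(ce/d)=\frac{d^{2}}{ce(d-ce)}$. In this range $P(ce/d)P(e/d)=\frac{d^{4}}{e(d-e)\,ce(d-ce)}=\frac{d^{2}}{c e^{2}}\bigl(1+O(e/d)+O(ce/d)\bigr)$, so summing over $1\le e<d/c$ and using $\sum_{e\ge1}e^{-2}=\pi^{2}/6$ gives $\frac{\pi^{2}}{6}\frac{d^{2}}{c}+O(d\log d)$. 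The reflection $e\mapsto d-e$, which sends $ce\bmod d\mapsto d-(ce\bmod d)$ and leaves the summand invariant, produces an equal contribution from $e$ near $d$, for a total of $\frac{\pi^{2}}{3}\frac{d^{2}}{c}$; after division by $4d$ this is exactly the claimed main term $\frac{\pi^{2}}{12}\frac{d}{c}$.

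The linear terms contribute $-\sum_{e}\bigl(P(ce/d)+P(e/d)\bigr)$. Since $\sum_{e}F(e/d)=2d\log d$ by Proposition 2.4 and $\sum_{e}|G(e/d)|=O(d)$, I get $\sum_{e}P(e/d)=2d\log d+O(d)$, and the same after reindexing by $e\mapsto ce\bmod d$; hence these terms give $-4d\log d+O(d)$, i.e.\ $-\log d+O(1)$ after dividing by $4d$, which is absorbed into the error $O(\log d)$. The purely lower-order terms (those built from the constant $1$ and from products $G\cdot G$) contribute $O(d)$, since each $G$ is bounded.

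The crux will be to bound the two remaining families: the part of $\sum_{e}P(ce/d)P(e/d)$ with $e\ge d/c$, where $ce$ has wrapped around modulo $d$, and the cross terms $\sum_{e}\bigl(P(ce/d)G(e/d)+P(e/d)G(ce/d)\bigr)$. For the latter I would use $P(ce/d)\ll d/\delta(ce)$ and $|G(e/d)|\ll \delta(e)/d$, where $\delta(n)$ is the distance from $n$ to the nearest multiple of $d$; grouping the sum by the value $j=\delta(ce)$ and using that $\gcd(c,d)=1$ forces at most two values of $e$ with $\delta(ce)=j$, each with $\delta(e)\le d/2$, gives $\sum_{e}\delta(e)/\delta(ce)\ll d\sum_{j\le d/2}1/j=O(d\log d)$, hence $O(\log d)$ after dividing by $4d$. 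For the wrap-around part I would partition $e$ into the blocks $kd/c<e<(k+1)d/c$ for $1\le k\le c-1$, bound $1/e\ll c/(kd)$ on block $k$, and control the inner sum over the residues $ce-kd$ using that the smallest residue $r_{0}^{(k)}=c-(kd\bmod c)$ runs over a permutation of $\{1,\dots,c-1\}$; a Cauchy--Schwarz estimate then bounds each block's contribution and the sum over $k$ stays within $O(d\log d)$. This uniform bookkeeping of the wrap-around and $G$-cross terms is the main obstacle, since it is precisely where the coprimality of $c$ and $d$ must be used to keep the near-poles of $F(e/d)$ and $F(ce/d)$ from reinforcing one another too often.
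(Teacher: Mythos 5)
Your outline follows the paper's proof very closely: the identity $R(c,d)=\frac{1}{4d}\sum_{0<e<d}F(ce/d)F(e/d)-(\log d)^{2}$, the splitting $F=P-1+G$ with $P(x)=\frac1x+\frac1{1-x}$, the main term coming from the range where $ce$ does not reduce modulo $d$ (the paper symmetrizes by $e\mapsto d-e$ first and extracts it from the block $\lfloor ce/d\rfloor=0$, which amounts to the same thing), and your $\delta$-bookkeeping for the $P\cdot G$ cross terms is a sound variant of the paper's simpler bijection bound. The gap lies in the two estimates you yourself single out as the crux, and the problem is not merely that they are left as sketches: as stated they are false. On $1\le e<d/c$ the expansion $P(ce/d)P(e/d)=\frac{d^{2}}{ce^{2}}\bigl(1+O(e/d)+O(ce/d)\bigr)$ fails at the right edge, because $P(ce/d)=\frac{d^{2}}{ce(d-ce)}$ and $d-ce$ can be as small as $1$ there: if $d\equiv1\pmod{c}$ and $e_{0}=(d-1)/c$, the product is $\asymp cd$, not $O(c)$, so your main-range error is $\asymp cd$ rather than $O(d\log d)$. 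Similarly, in the wrap-around region the scheme you describe (bound $1/e\ll c/(kd)$ on block $k$, inner sums $\ll d/r_{0}^{(k)}+(d/c)\log d$, then Cauchy--Schwarz over the permutation $k\mapsto r_{0}^{(k)}$) can give at best $cd\sum_{k}1/(k\,r_{0}^{(k)})=O(cd)$, and this is sharp: for $d\equiv-1\pmod{c}$ one has $r_{0}^{(1)}=1$, and the single term $e=(d+1)/c$ already contributes $\asymp cd$. Since $F>0$ on $(0,1)$ (pair the terms for $\pm n$ to see $F(x)>1/x$), every term of $\frac{1}{4d}\sum_{e}F(ce/d)F(e/d)$ is positive, so these $\asymp cd$ contributions cannot be cancelled by anything; taking for instance $d=c^{2}+1$ gives $R(c,d)-\bigl(\frac{\pi^{2}}{12}\frac{d}{c}-(\log d)^{2}\bigr)\ge\frac{c}{2}-O(1)\gg\log d$. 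Hence no amount of bookkeeping can complete your plan to an error $O(\log d)$ uniform in $c$.

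To be fair to you, this is precisely where the paper's own proof breaks down as well: it claims $\sum_{p<ce/d<p+1}\frac{1}{ce/d-p}=O(\frac{d}{c}\log d)$ immediately after writing this sum as $d\bigl(1+\frac{1}{c}\log\frac{d}{c}+\gamma+O(\frac{c}{d})\bigr)$ --- the leading $d$, which comes exactly from a residue $ce-pd=1$, is $O(\frac{d}{c}\log d)$ only when $c=O(\log d)$ --- and the subsequent summation over blocks ($\sum_{p}c/p$ treated as $c/\log c$) is also garbled. So the asserted uniformity in $c$ is not actually proved in the paper either, and by the positivity argument above it is genuinely false once $c$ grows faster than $\log d$. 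Both your outline and the paper's argument become correct, and essentially identical, if one either restricts to $c=O(\log d)$ or weakens the conclusion to $R(c,d)=\frac{\pi^{2}}{12}\frac{d}{c}-(\log d)^{2}+O(c+\log c\log d)$; with that weaker lemma, the error terms of Theorems \ref{thm:asymp+} and \ref{thm:asymp overall} acquire an extra $O(c)$ as well.
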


\begin{proof} We recall that for $0<x<1$ 
\[
F(x)=\frac{1}{x}+\frac{1}{1-x}-1+G(x)
\]
then $G(x)=O(x(1-x))$. We have 
\begin{align*}
R(c,d) & =\frac{1}{4d}\sum_{e=1}^{d-1}F(\frac{ce}{d})F(\frac{e}{d})-(\log d)^{2}\\
 & =\frac{1}{4d}\sum_{e=1}^{d-1}(\frac{1}{\{ce/d\}}+\frac{1}{1-\{ce/d\}}-1+G(\frac{ce}{d}))(\frac{1}{e/d}+\frac{1}{1-e/d}-1+G(\frac{e}{d}))-(\log d)^{2}
\end{align*}
where $\{\alpha\}$ is the fractional part of $\alpha$. Note that
both in $\{ce/d\}$ and $1-\{ce/d\}$, elements of $\{1/d,2/d,\cdots,(d-1)/d\}$
appears once. So 
\[
|\frac{1}{4d}\sum_{e=1}^{d-1}(\frac{1}{\{ce/d\}}+\frac{1}{1-\{ce/d\}})(-1+G(\frac{e}{d}))|=O(\frac{1}{4d}\cdot2\sum_{e=1}^{d-1}\frac{d}{e})=O(\log d).
\]
Same holds for the sum 
\[
|\frac{1}{4d}\sum_{e=1}^{d-1}(-1+G(\frac{ce}{d}))(\frac{1}{e/d}+\frac{1}{1-e/d})|=O(\frac{1}{4d}2\sum_{e=1}^{d-1}\frac{d}{e})=O(\log d)
\]
and sum of $G(ce/d)G(d)$ is obviously bounded too. So it remains
to calculate 
\begin{align*}
R(c,d) & =\frac{1}{4d}\sum_{e=1}^{d-1}\frac{1}{e/d}\frac{1}{\{ce/d\}}+\frac{1}{4d}\sum_{e=1}^{d-1}\frac{1}{e/d}\frac{1}{(1-\{ce/d\})}\\
 & +\frac{1}{4d}\sum_{e=1}^{d-1}\frac{1}{(1-e/d)}\frac{1}{\{ce/d\}}+\frac{1}{4d}\sum_{e=1}^{d-1}\frac{1}{(1-e/d)}\frac{1}{(1-\{ce/d\})}-(\log d)^{2}+O(\log d).
\end{align*}
Observe that if we replace $e$ to $d-e$, we have $(e/d)\leftrightarrow(1-e/d)$
and $\{ce/d\}\leftrightarrow\{1-ce/d\}$. Hence we have 
\[
R(c,d)=\frac{1}{2d}\sum_{e=1}^{d-1}\frac{1}{(1-e/d)}\frac{1}{\{ce/d\}}+\frac{1}{2d}\sum_{e=1}^{d-1}\frac{1}{e/d}\frac{1}{\{ce/d\}}-(\log d)^{2}+O(\log d).
\]
Let $p=\lfloor ce/d\rfloor$. Then as $p$ ranges from $0$ to $c-1$, we have 
\[
\frac{1}{2d}\sum_{e=1}^{d-1}\frac{1}{(1-e/d)}\frac{1}{\{ce/d\}}=\frac{1}{2d}\sum_{p=0}^{c-1}\sum_{p<ce/d<p+1}\frac{1}{(1-\frac{e}{d})}\frac{1}{(\frac{ce}{d}-p)}.
\]
For $p<c-1$,
\[
1-\frac{e}{d}>1-\frac{p+1}{c}
\]
and 
\begin{align*}
\sum_{p<ce/d<p+1}\frac{1}{\frac{ce}{d}-p} & <\frac{1}{\frac{1}{d}}+\frac{1}{\frac{c+1}{d}}+\frac{1}{\frac{2c+1}{d}}+\cdots+\frac{1}{\frac{(d/c)c+1}{d}}\\
 & =d(1+\frac{1}{c}\log\frac{d}{c}+\gamma+O(\frac{c}{d}))=O(\frac{d}{c}\log d).
\end{align*}
Thus 
\[
\frac{1}{2d}\sum_{p=0}^{c-2}\sum_{p<\frac{ce}{d}<p+1}\frac{1}{(1-\frac{e}{d})}\frac{1}{(\frac{ce}{d}-p)}=O(\frac{1}{2d}\sum_{p=0}^{c-2}\frac{1}{1-\frac{p+1}{c}}\cdot\frac{d}{c}\log d)=O(\frac{1}{2d}\frac{c}{\log c}\frac{d}{c}\log d)=O(\log d).
\]
For $p=c-1$, let $t=d-e$. Then $0<t<d/c$, and 
\[
\frac{1}{2d}\sum_{c-1<\frac{ce}{d}<c}\frac{1}{(1-\frac{e}{d})}\frac{1}{(\frac{ce}{d}-(c-1))}=\frac{1}{2d}\sum_{0<t<d/c}\frac{1}{\frac{t}{d}}\frac{1}{1-\frac{ct}{d}}=\frac{c}{2d}\sum_{0<t<d/c}(\frac{1}{ct/d}+\frac{1}{1-ct/d}).
\]
Both sums are of $O(d\log d/c)$, so we have 
\[
\frac{1}{2d}\sum_{e=1}^{d-1}\frac{1}{(1-e/d)}\frac{1}{\{ce/d\}}=O(\log d).
\]
For the second sum 
\[
\frac{1}{2d}\sum_{e=1}^{d-1}\frac{1}{e/d}\frac{1}{(\{ce/d\})}=\frac{1}{2d}\sum_{p=0}^{c-1}\sum_{p<ce/d<p+1}\frac{1}{\frac{e}{d}}\frac{1}{(\frac{ce}{d}-p)},
\]
the part which $p\ge1$ is bounded by 
\[
\frac{1}{2d}\sum_{p=1}^{c-1}\sum_{p<ce/d<p+1}\frac{1}{\frac{e}{d}}\frac{1}{(\frac{ce}{d}-p)}=O(\frac{1}{2d}\sum_{p=1}^{c-1}\frac{1}{\frac{p}{c}}\cdot\frac{d}{c}\log d)=O(\frac{1}{2d}\frac{c}{\log c}\frac{d}{c}\log d)=O(\log d).
\]
The part for $p=0$ gives 
\[
\frac{1}{2d}\sum_{0<e<d/c}\frac{1}{e/d}\cdot\frac{1}{ce/d}=\frac{d}{2c}\sum_{0<e<d/c}\frac{1}{e^{2}}=\frac{d\pi^{2}}{12c}(1+O(\frac{c}{d}))=\frac{d\pi^{2}}{12c}+O(1).
\]
Combining all those results give 
\begin{align*}
R(c,d) & =\sum_{k=1}^{d-1}\log(2\sin(\pi k/d))\log(2\sin(\pi ck/d))\\
 & =\frac{\pi^{2}}{12}\frac{d}{c}-(\log d)^{2}+O(\log d)
\end{align*}
as required. \end{proof} 

For the case $d<c$, it will give
\[
R(c,d) = R(c \, \mathrm{mod} \, d) = \frac{\pi^2}{12} \frac{d}{(c \, \mathrm{mod} \, d)} - (\log d)^2 + O( \log d).
\]
where $(c \, \mathrm{mod} \, d)$ is remainder of $c$ by $d$.
Keeping this in mind, we combine theorem 1 \ref{thm:explicit+} and Lemma 4.1 to obtain
\begin{align*}
M_{+}(f,c) & =\frac{\phi(f)}{f^{2}}\sum_{d\vert f}\mu\left(\frac{f}{d}\right)d\left(\left(\log\frac{f}{d}\right)^{2}R(c,d)\right)-\left(\frac{\phi(f)}{f}\sum_{p\vert f}\frac{\log p}{p-1}\right)^{2}\\
 & =\frac{\phi(f)}{f^{2}}\sum_{d\vert f}\mu\left(\frac{f}{d}\right)d\left(\left(\log\frac{f}{d}\right)^{2}-\left(\log d\right)^{2}+\frac{\pi^{2}}{12c}d+O(\log d)\right)-\left(\frac{\phi(f)}{f}\sum_{p\vert f}\frac{\log p}{p-1}\right)^{2}\\
& + \frac{\phi(f)}{f^2} \sum_{\substack{d \vert f \\ d < c}} \mu\left(\frac{f}{d}\right) d \left(  \frac{\pi^2}{12} \frac{d}{(c \, \mathrm{mod} \, d)} - \frac{\pi^2}{12}\frac{d}{c}  + O (\log d) \right) \\
& =\frac{\phi(f)}{f^{2}}\sum_{d\vert f}\mu\left(\frac{f}{d}\right)d\left(\left(\log\frac{f}{d}\right)^{2}-\left(\log d\right)^{2}\right)+\frac{\phi(f)}{f^{2}}\frac{\pi^{2}}{12c}\sum_{d\vert f}\mu\left(\frac{f}{d}\right)d^{2}\\
 & -\left(\frac{\phi(f)}{f}\sum_{p\vert f}\frac{\log p}{p-1}\right)^{2}+O\left(\frac{\phi(f)}{f^{2}}\sum_{d\vert f}|\mu\left(\frac{f}{d}\right)|d\log d  +  \frac{\phi(f)}{f^2} \sum_{\substack{d \vert f \\ d<c}} |\mu \left( \frac{f}{d} \right) | d^2  \right).
\end{align*}
One can see that the error term can be bounded as 
\[
\frac{\phi(f)}{f^{2}}\sum_{d\vert f}|\mu\left(\frac{f}{d}\right)|d\log d\le\frac{\phi(f)}{f}\sum_{d\vert f}\frac{|\mu(f/d)|}{f/d}\log f=\log f\prod_{p\vert f}\left(1-\frac{1}{p}\right)\left(1+\frac{1}{p}\right)<\log f
\]
and
\[
 \frac{\phi(f)}{f^2} \sum_{\substack{d \vert f \\ d<c}} |\mu \left( \frac{f}{d} \right) | < \frac{1}{f} \sum_{d<c} d^2 < \frac{c^3}{f}
\]
and the second term contributes as the main term 
\[
\frac{\phi(f)}{f^{2}}\frac{\pi^{2}}{12c}\sum_{d\vert f}\mu\left(\frac{f}{d}\right)d^{2}=\frac{\pi^{2}}{12c}\phi(f)\sum_{d\vert f}\frac{\mu(f/d)}{(f/d)^{2}}=\frac{\pi^{2}}{12c}\phi(f)\prod_{p\vert f}\left(1-\frac{1}{p^{2}}\right).
\]
For the rest, 
\begin{align*}
 & \frac{\phi(f)}{f^{2}}\sum_{d\vert f}\mu\left(\frac{f}{d}\right)d\left(\left(\log\frac{f}{d}\right)^{2}-\left(\log d\right)^{2}\right)\\
 & =\frac{\phi(f)}{f}\sum_{d\vert f}\frac{\mu(f/d)}{f/d}\log f(2\log\frac{f}{d}-\log f)\\
 & =\frac{2\phi(f)\log f}{f}\sum_{d'\vert f}\frac{\mu(d')\log d'}{d'}-\frac{\phi(f)(\log f)^{2}}{f}\sum_{d'\vert f}\frac{\mu(d')}{d'}\\
 & =\frac{2\phi(f)\log f}{f}\sum_{d'\vert f}\frac{\mu(d')\log d'}{d'}-\frac{\phi(f)^{2}(\log f)^{2}}{f^{2}}\\
 & =-\left(\frac{\phi(f)\log f}{f}\right)^{2}-2\frac{\phi(f)\log f}{f}\cdot\frac{\phi(f)}{f}\sum_{p\vert f}\frac{\log p}{p-1}.
\end{align*}
Thus our $M_{+}(f,c)$ has asymptotic of 
\[
M_{+}(f,c)=\frac{\pi^{2}}{12c}\phi(f)\prod_{p\vert f}\left(1-\frac{1}{p^{2}}\right)-\frac{\phi(f)^{2}}{f^{2}}\left(\log f+\sum_{p\vert f}\frac{\log p}{p-1}\right)^{2}+O(c^3 f^{-1} + \log f).
\]
This proves Theorem \ref{thm:asymp+}.

\section{Conclusion}

Theorem \ref{thm:asymp overall} is a direct consequence of Louboutin's result in \cite{key-5} and our Theorem \ref{thm:asymp+}. If we recall 
\[
M_{-}(f,c) = \frac{\pi^2}{12c} \frac{\phi(f)^2}{f} \left( \prod_{p \vert f} (1+\frac{1}{p}) - \frac{3c}{f} \right) - \frac{\pi^2 \phi(f)}{2c f^2} \sum_{d \vert f} d \mu(f/d) S(c,d),
\]
since $S(c,d)$ is the quantity depending only on $d$ mod $c$, one can see that $S(c,d)=O(1)$. We also note that
\[
\frac{\pi^2 \phi(f)}{2c f^2} \sum_{d \vert f} d \mu(f/d) S(c,d) = O( \frac{\phi(f)}{f} \sum_{d \vert f} \frac{|\mu(f/d)|}{f/d}) = O(\prod_{p \vert f} (1-\frac{1}{p})(1+\frac{1}{p}) ) = O(1)
\]
and
\[
\frac{\pi^2}{12c} \frac{\phi(f)^2}{f} \frac{3c}{f} = O(1).
\]
Hence
\[
M_{-}(f,c) = \frac{\pi^2}{12c} \frac{\phi(f)^2}{f} \prod_{p \vert f} (1+\frac{1}{p}) + O(1) = \frac{\pi^2}{12c} \phi(f) \prod_{p \vert f} \left( 1 - \frac{1}{p^2} \right) + O(1).
\]
Thus with our Theorem \ref{thm:asymp+} we can conclude that 
\[
M(f,c)=\frac{\pi^{2}}{6c}\phi(f)\prod_{p\vert f}\left(1-\frac{1}{p^{2}}\right)-\frac{\phi(f)^{2}}{f^{2}}\left(\log f+\sum_{p\vert f}\frac{\log p}{p-1}\right)^{2}+O(\log f)
\]
as stated in Theorem \ref{thm:asymp overall}.
\begin{acknowledgements} The authours would like to thank Min-Soo Kim and Roger Heath-Brown for helpful discussions.
\end{acknowledgements}

\end{document}